\documentclass[a4paper,11pt]{amsart}
\usepackage[utf8]{inputenc}
\usepackage[T1]{fontenc}
\usepackage[english]{babel}
\usepackage{amsmath, amsthm, amssymb, enumerate}
\usepackage{enumerate, enumitem, mathdots, hyperref, array, bm, booktabs, multirow, tabularx, arydshln, mathtools, float}
\theoremstyle{plain}
	\newtheorem{lemma}{Lemma}[section]
	\newtheorem{proposition}[lemma]{Proposition}
	\newtheorem{theorem}[lemma]{Theorem}
	
\theoremstyle{remark}
	\newtheorem{remark}[lemma]{Remark}

\newcommand{\mbF}{\mathbb{F}}
\newcommand{\mbK}{\mathbb{K}}
\newcommand{\mfd}{\mathfrak{d}}
\newcommand{\mfg}{\mathfrak{g}}
\newcommand{\mfn}{\mathfrak{n}}
\newcommand{\mfr}{\mathfrak{r}}
\newcommand{\mfgl}{\mathfrak{g}\mathfrak{l}}
\newcommand{\mfso}{\mathfrak{s}\mathfrak{o}}
\DeclareMathOperator{\der}{Der}
\DeclareMathOperator{\ad}{ad} 
\DeclareMathOperator{\inner}{Inner} 
\DeclareMathOperator{\asoc}{asoc}
\DeclareMathOperator{\im}{Im}
\DeclareMathOperator{\id}{id}
\DeclareMathOperator{\End}{End}
\title{Solvable Quadratic Lie algebras up to dimension 7
}
\author{Pilar Benito} 
\address{Departamento de Matem\'aticas y Computaci\'on, Universidad de La Rioja, 26006 Logro\~no, Spain}
\email{pilar.benito@unirioja.es} 
\thanks{This research has been partially funded by grant Fortalece 2023/03 of ``Comunidad Aut\'onoma de La Rioja'' and by grant  PID2021-123461NB-C21, funded by MCIN/AEI/10.13039/501100011033 and by ``ERDF: A way of making Europe'' starting from 2023. The second author is also by a predoctoral research contract FPI-2024 of ``Universidad de La Rioja''.}

\author{Javier R\'andez-Ib\'a\~nez}
\address{Departamento de Matem\'aticas y Computaci\'on, Universidad de La Rioja, 26006 Logro\~no, Spain}
\email{javier.randez@unirioja.es}
\date{\today}

\subjclass[2000]{17B05, 17B60, 15A63, 15A21}

\begin{document}
    \maketitle

    \begin{abstract}
        The double extension method allows the construction of quadratic Lie algebras from smaller ones. Using this approach, in this work we describe all solvable indecomposable quadratic Lie algebras of dimension at most $7$ over arbitrary fields of characteristic not $2$. In particular, we get complete classifications up to isometric isomorphism over algebraically and real closed fields.
    \end{abstract}
    \section{Introduction}
        Throughout this paper, $\mbF$ denotes a field of characteristic different from $2$, and all vector spaces are assumed to be finite-dimensional over $\mbF$. %
    
        A \textit{Lie algebra} $L$ is a $\mbF$-vector space equipped with a bilinear product $[x,y]$ satisfying \textit{skew-symmetry} and the \textit{Jacobi identity}, $[[x,y],z]+[[y,z],x]+[[z,x],y]=0$. A bilinear form $\varphi$ on $L$ is said to be  \textit{invariant} if $\varphi([x,y],z)=\varphi(x,[y,z])$. A \textit{quadratic Lie algebra} is a pair $(L,\varphi)$, where $L$ is a Lie algebra and $\varphi$ is a nondegenerate, symmetric, invariant bilinear form. Quadratic Lie algebras are also known as metrizable or quasi-classical Lie algebras. If a quadratic algebra $(L,\varphi)$ contains an ideal $I$ such that $\varphi\vert_I$ is non-degenerate, $L=I\oplus I^\perp$ and $L$ is said \textit{decomposable}. Otherwise $L$ is called \textit{indecomposable}. We point out that any quadratic Lie algebra decomposes as an orthogonal sum of indecomposable ideals. Given a Lie algebra $L$, its \textit{derived series} is defined recursively by $L'=[L,L]$ and $L^{(i+1)}=[L^{(i)},L^{(i)}]$. The algebra $L$ is called \textit{solvable} if $L^{(k)}=0$ for some $k\ge 0$. 

        The main goal of this paper is to obtain a complete classification of indecomposable solvable quadratic Lie algebras of dimension at most $7$ over arbitrary fields. The classification of quadratic nilpotent over the complex and real fields up to dimension $\leq 10$ can be found in \cite{Favre_Santharoubane_1987,Kath_2007,Benito_delaConcepcion_Laliena_2017}. And the classification of real nonsolvable quadratic Lie algebras of dimension $\leq 13$, as well as that of solvable quadratic Lie algebras of dimension $\leq 6$, was obtained in \cite{campoamor2008quasi, Benayadi_Elduque_2014}. For complex indecomposable solvable quadratic Lie algebras of dimension at most $8$, we refer to the preprint \cite{duong2014solvable}.

        By the results of Medina and Revoy \cite{Medina_Revoy_1985} and Favre and Santharoubane \cite{Favre_Santharoubane_1987}, every indecomposable solvable quadratic Lie algebra in characteristic zero is either one-dimensional or a double extension of a solvable quadratic Lie algebra of dimension two less. Following \cite{FigueroaOFarrill_Stanciu_1996,Benayadi_Elduque_2014}, in a more general context, the double extension of any quadratic Lie algebra $\mfg_1$ by a Lie algebra $\mfg_2$, not necessarily quadratic, is obtained by combining a central extension with a semidirect product construction involving both algebras. This concept provides a powerful tool to construct examples and to analyze the structure of quadratic Lie algebras. However, the classification of such algebras remains a wild problem, even in the solvable case and in low dimension. Moreover most research on quadratic algebras has been carried out over fields of characteristic zero (see \cite{Ovando_2016} for a survey-guide). In positive characteristic they have not been as extensively studied. 
    
        Along this work, we make use of the notion of a one-dimensional double extension, namely the special case in which $\dim \mfg_2 = 1$. According to \cite[Proposition 2.9]{Favre_Santharoubane_1987} every solvable quadratic Lie algebra can be obtained through a sequence of one-dimensional double extensions.
    
        The paper is organized as follows. Section \ref{general_section} reviews basic properties of quadratic Lie algebras and double extensions, together with the spectral structure of endomorphisms $\delta$ of orthogonal spaces $(V, \varphi)$ and suitable matrix representations of pairs $(\delta,\varphi)$ satisfying $\varphi(\delta(x),y)+\varphi(x, \delta(y))=0$. These tools allow us in Section \ref{section_constructions} to provide explicit bases to describe the Lie brackets and invariant bilinear forms of all indecomposable solvable quadratic Lie algebras of dimension at most $7$. Section \ref{section_alg_real_closed_fields} concludes with their classification over algebraically closed and real closed fields.
    
    \section{General and particular facts}\label{general_section}
        Given a quadratic Lie algebra $(L,\varphi)$ a linear map $\delta\colon L\to L$ is called a \textit{derivation} if $\delta([x,y])=[\delta(x),y]+[x,\delta(y)]$ and it is said \textit{$\varphi$-skew derivation} if $\varphi(\delta(x),y)=-\varphi(x,\delta(y))$. The space of $\varphi$-skew derivations, denoted by $\der_\varphi(L)$, is a subalgebra of the general linear algebra $\mfgl(L)$. For any $x\in L$, the left product by $x$ in $L$, $\ad\, x(y)=[x,y]$, is a $\varphi$-skew derivation. These derivations are called \textit{inner}. Their set, denoted by $\inner(L)$, is a Lie subalgebra of $\der(L)$. So, $\inner(L)\leq \der_\varphi(L)\leq \der (L)\leq \mfgl(L)$.
    
        If we take $\delta\in\der_\varphi(L)$ we can extend the Lie algebra $L$ to the vector space $L_\delta=\mbF\cdot \delta\oplus L\oplus\mbF\cdot \delta^*$ and define the bracket product,\begin{equation}\label{eq_double_product}
            [p \delta+x+q \delta^*,r \delta+y+s\delta^*]_\delta=p \delta(y)-r \delta(x)+[x,y]_L+\varphi(\delta(x),y)\delta^*.          
        \end{equation}
        We can also define the bilinear form,   
        \begin{equation}\label{eq_double_product_form}
            \varphi_\delta(p \delta+x+q \delta^*,r \delta+y+s \delta^*)=ps+qr+\varphi(x,y).
        \end{equation}
        \noindent The pair $(L_\delta,\varphi_\delta)$ is a larger quadratic Lie algebra that will be denoted along the paper by $\mfd(L,\varphi,\delta)$. This construction is known as the \emph{double extension of $L$ by $\delta$}, namely one-dimensional double extension.
    
        For any quadratic Lie algebra $(L,\varphi)$ over a field of  characteristic not $2$, and any ideal $I$ of $L$, the invariance and non-degeneracy of the bilinear form $\varphi$ readily imply the following properties:
        \begin{enumerate}[label=(P\arabic*)] 
            \item \label{item_dim_orthogonal} $I^\perp$ is an ideal of $L$, and $\dim L=\dim I+\dim I^\perp$. Moreover, $L=I\oplus I^\perp$ if $I\cap I^\perp=0$ and hence $\varphi\vert_I$ and $\varphi\vert_{I^\perp}$ are nondegenerate whenever $I,I^\perp\neq 0$. Henceforth, we shall refer to such ideals as {\it regular ideals}.
            \item \label{item_center_derived} $Z(L)^\perp=L^2=[L,L]$ and $(L^2)^\perp=Z(L)$.
            \item \label{item_min_ideal} $I^\perp \subseteq C_L(I)=\{x\in L:[x,I]=0\}$. In particular, if $I$ is a minimal ideal, then $I^\perp$ is a maximal ideal, and either $I$ is a one-dimensional central ideal or $C_L(I)=I^\perp$.
            \item If $L$ is a solvable Lie algebra, $L^2\neq L$ and $0\neq Z(L)=\asoc(L)$, the (direct) sum of all minimal ideals. This follows from \ref{item_center_derived} and \ref{item_min_ideal}.          
        \end{enumerate}

        The next result is well-known in characteristic zero and also holds in arbitrary characteristic according to \cite[Theorem 2.2]{Bordemann_1997}.
        \begin{proposition}\label{prop_double_solvable}
            Any nonabelian solvable quadratic Lie algebra $(L,\varphi)$ over a field of characteristic different from $2$ has a nonzero isotropic central element. Hence, $(L,\varphi)$ arises as a double extension of a solvable quadratic Lie algebra of dimension two less by a $\varphi$-skew derivation.
        \end{proposition}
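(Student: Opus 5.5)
The plan has two independent parts. First, show that $Z(L)\cap L^2\neq 0$; every element there is isotropic. Second, split off a hyperbolic plane through such an element and read off the double extension structure \eqref{eq_double_product}, \eqref{eq_double_product_form}.

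\emph{Isotropic central element.} By \ref{item_center_derived}, $Z(L)=(L^2)^\perp$, so any $z\in Z(L)\cap L^2$ satisfies $\varphi(z,z)=0$. It therefore suffices to show $Z(L)\cap L^2\neq 0$. Suppose instead $Z(L)\cap L^2=0$. By \ref{item_center_derived} and \ref{item_dim_orthogonal}, $\dim Z(L)+\dim L^2=\dim L$, so $L=L^2\oplus Z(L)$ as vector spaces. Since $Z(L)$ is central, this gives
\[
L^2=[L^2+Z(L),L^2+Z(L)]=[L^2,L^2],
\]
so $L^2$ is perfect. A perfect subalgebra of a solvable algebra is zero, so $L^2=0$ and $L$ is abelian, a contradiction. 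Hence there is $0\neq z\in Z(L)\cap L^2$ with $\varphi(z,z)=0$. This argument does not depend on the characteristic.

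\emph{Double extension.} By nondegeneracy, choose $d$ with $\varphi(d,z)=1$. Replacing $d$ by $d-\tfrac12\varphi(d,d)z$ (here $\mathrm{char}\,\mbF\neq2$ is used) makes $d$ isotropic. Then $\mathrm{span}\{d,z\}$ is a hyperbolic plane, and $W:=\mathrm{span}\{d,z\}^\perp$ is nondegenerate with $L=\mbF d\oplus W\oplus \mbF z$ and $z^\perp=W\oplus\mbF z$. Since $z$ is central, $\mbF z$ is an ideal, so by \ref{item_dim_orthogonal} $z^\perp$ is an ideal containing $z$. For $x,y\in W$, write
\[
[x,y]=[x,y]_W+\varphi(d,[x,y])\,z .
\]
Then $(W,[\,,]_W)$ is isomorphic to the quotient Lie algebra $z^\perp/\mbF z$, which is solvable. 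The restriction $\varphi|_W$ is nondegenerate, symmetric and invariant, the latter because $z$ is orthogonal to $W$ and $\varphi$ is invariant on $L$.

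\emph{The derivation and the comparison.} Set $\delta=\ad d|_W$. For $x\in W$:
\begin{itemize}
\item $\varphi([d,x],z)=\varphi(d,[x,z])=0$, so $[d,x]$ has no $d$-component;
\item $\varphi([d,x],d)=-\varphi(x,[d,d])=0$, so it has no $z$-component.
\end{itemize}
Hence $\delta(W)\subseteq W$. Because $\ad d$ is a $\varphi$-skew derivation of $L$ preserving $z^\perp$ and killing $z$, $\delta$ is a $\varphi|_W$-skew derivation of $(W,[\,,]_W)$.

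It remains to compare brackets. First, $[d,x]=\delta(x)$ and $z$ is central. Second,
\[
\varphi(d,[x,y])=\varphi([d,x],y)=\varphi(\delta(x),y),
\]
which is exactly the $\delta^*$-coefficient in \eqref{eq_double_product}. Finally, $\varphi$ on $\mbF d\oplus W\oplus\mbF z$ has the shape \eqref{eq_double_product_form}. Therefore $d\mapsto\delta$, $z\mapsto\delta^*$, $W\ni x\mapsto x$ gives an isometric isomorphism $(L,\varphi)\cong\mfd(W,\varphi|_W,\delta)$, with $\dim W=\dim L-2$.

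The only genuine obstacle is the first part, guaranteeing a central element that is isotropic. Reducing it to the dimension count from \ref{item_center_derived} and the perfectness argument handles this. The second part is a routine, if bookkeeping-heavy, verification.
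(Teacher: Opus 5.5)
Your proof is correct and follows the paper's route. The first part is the paper's argument: if $Z(L)\cap L^2=(L^2)^\perp\cap L^2$ were zero, the dimension count gives $L=L^2\oplus Z(L)$, so $L^2$ is perfect and therefore zero. For the second part the paper only cites Favre--Santharoubane's Proposition 2.9, applied to $(\mbF z)^\perp/\mbF z$. You instead carry out that construction explicitly, via the hyperbolic plane $\mathrm{span}\{d,z\}$ and the identification $W\cong z^\perp/\mbF z$, which shows exactly where $\mathrm{char}\,\mbF\neq 2$ is used: the factor $\tfrac12$ that makes $d$ isotropic.
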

        \begin{proof}
            Since $Z(L)=(L^2)^\perp$ if there are no central and isotropic elements we have $L=Z(L)\oplus L^2$. Thus $L^2=(L^2)^2$, and $L^2=0$ by solvability. Therefore $L$ is abelian, a contradiction. 
            Now, the proof of Proposition 2.9 in \cite{Favre_Santharoubane_1987} holds in characteristic not $2$, and $L$ can be obtained as a double extension of the $(n-2)$-dimensional quadratic and solvable Lie algebra $\frac{(\mbF\cdot z)^\perp}{\mbF\cdot z}$ for any isotropic central element $z\in L$.
        \end{proof}
        In the sequel, the assumption that the characteristic is different from $2$ will be understood unless otherwise stated. Assume now $(L,\varphi)$ is quadratic and indecomposable. The only indecomposable abelian Lie algebra is the one-dimensional. Suppose $L^2\neq 0$ and note that, by indecomposability:
        
        \begin{enumerate}[label=(P\arabic*)]
            \setcounter{enumi}{4}
            \item \label{item_no_regular_ideals} $L$ has no regular ideals. In particular, minimal ideals are totally isotropic, $I\subseteq I^\perp$. 
            \item \label{item_indecomposable_reduced}$Z(L)\subseteq L^2$: Otherwise, there exists a non-isotropic element $x\in Z(L)\setminus L^2$, so \(\mbF x\) is a regular ideal, contradicting  \ref{item_no_regular_ideals}. Any (arbitrary) Lie algebra satisfying \(Z(L)\subseteq L^2\) is called \emph{reduced}.\footnote{The name comes from \cite{Tsou_Walker_1957}, where it is proved that every quadratic Lie algebra decomposes as an orthogonal direct sum $M=L\oplus A$ where $A$ is abelian quadratic and $L$ is quadratic and reduced.}
        \end{enumerate}
    
        \begin{lemma}\label{lemma_dimension_bounds}
            Let $(L,\varphi)$ be a non-abelian quadratic Lie algebra. Then $\dim L^2\geq 3$. If, in addition, $L$ is solvable and reduced, then $\dim L=\dim L^2+q$ for some $1\leq q\leq \dim L^2$ and $\left\lceil \frac{\dim L}{2} \right\rceil\leq \dim L^2\leq \dim L-1$. 
        \end{lemma}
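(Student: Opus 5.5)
We prove the three claims in turn: the bound $\dim L^2\ge 3$, the decomposition $\dim L=\dim L^2+q$ with $1\le q\le\dim L^2$, and the resulting two-sided bound on $\dim L^2$.

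\textbf{The bound $\dim L^2\geq 3$.} We use the alternating trilinear form $\omega(x,y,z)=\varphi([x,y],z)$. By invariance and skew-symmetry, $\omega$ is alternating. Its radical is $Z(L)$: $\omega(x,\cdot,\cdot)=0$ means $\varphi([x,y],z)=0$ for all $y,z$, hence $[x,L]=0$ by nondegeneracy. So $\omega$ descends to a nonzero alternating $3$-form on $L/Z(L)$. By \ref{item_center_derived}, $\dim L/Z(L)=\dim L-\dim Z(L)=\dim L^2$. A nonzero alternating $3$-form needs a space of dimension at least $3$, so $\dim L^2\geq 3$. (If $\dim L/Z(L)\le 2$, every alternating trilinear form on it vanishes; this holds in characteristic $\neq 2$, and in fact alternating forms vanish on any space of dimension less than $3$ in general.)

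\textbf{The decomposition $\dim L=\dim L^2+q$.} Set $q=\dim L-\dim L^2=\dim Z(L)$, using \ref{item_center_derived}. Since $L$ is non-abelian and solvable, Proposition~\ref{prop_double_solvable} gives a nonzero isotropic central element, so $q\geq 1$. The reduced hypothesis gives $Z(L)\subseteq L^2$, so $q=\dim Z(L)\le\dim L^2$. (Alternatively, $Z(L)\subseteq L^2=Z(L)^\perp$ says $Z(L)$ is totally isotropic, which forces $2\dim Z(L)\le \dim L$; the same inequality.)

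\textbf{The bounds on $\dim L^2$.} From $q\ge1$ we get $\dim L^2\le\dim L-1$. From $q\le \dim L^2$ we get $\dim L\le 2\dim L^2$, so $\dim L^2\geq \dim L/2$. Since $\dim L^2$ is an integer, $\dim L^2\ge\lceil \dim L/2\rceil$.

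The only step needing care is the first one: recognising $Z(L)$ as the radical of $\omega$, so that the nonzero form lives on a space of dimension exactly $\dim L^2$. Everything else follows directly from \ref{item_center_derived} and Proposition~\ref{prop_double_solvable}.
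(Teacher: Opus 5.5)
Your proof is correct. For the second and third assertions it is the paper's argument: set $q=\dim Z(L)$, use the dimension formula from \ref{item_dim_orthogonal}--\ref{item_center_derived}, and use $Z(L)\subseteq L^2$ from reducedness. Your appeal to Proposition~\ref{prop_double_solvable} for $q\geq 1$ only makes explicit the use of solvability that the paper's one-line citation of \ref{item_dim_orthogonal} and \ref{item_indecomposable_reduced} leaves implicit.

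The genuine difference is in the bound $\dim L^2\geq 3$.

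\emph{The paper's route.} It argues by hand. If $\dim L/Z(L)=\dim L^2\leq 2$, then $L=Z(L)+\mbF x+\mbF y$, so $L^2=\mbF[x,y]$ has dimension at most $1$. Running the same argument once more gives $L=Z(L)+\mbF x$ and hence $L^2=0$.

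\emph{Your route.} You pass to the alternating $3$-form $\varphi([x,y],z)$, which is alternating by invariance, and identify its radical as exactly $Z(L)$. You then conclude in one step, since a nonzero alternating $3$-form cannot live on a space of dimension at most $2$.

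\emph{Comparison.} The paper's argument uses invariance only through \ref{item_center_derived} and is completely elementary. Yours avoids the iteration and gives the conceptual reason for the bound: the bracket of a quadratic Lie algebra induces a nonzero alternating $3$-form with trivial radical on $L/Z(L)$, a space of dimension $\dim L^2$. This is the standard viewpoint for quadratic Lie algebras.
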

    
        \begin{proof}
            For $\dim L^2\leq 2$, we have $\dim Z(L)=\dim L-\dim L^2\geq n-2$ and $L=Z(L)+\mbF x+\mbF y$. Hence $L^2=\mbF[x,y]$ and $\dim L^2\leq 1$. Repeating the reasoning, $L=Z(L)+\mbF x$ and $L^2=0$. Therefore $L$ is abelian whenever $\dim L^2\leq 2$. The latter assertion follows immediately from  \ref{item_dim_orthogonal} and \ref{item_indecomposable_reduced}.
        \end{proof}
    
        \begin{lemma}\label{lemma_indecomposable}
            Any solvable quadratic and reduced Lie algebra $L$ of dimension $\leq 7$ is indecomposable. Even more, $\dim L=1,4,5,6, 7$.  
        \end{lemma}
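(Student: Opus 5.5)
Throughout, $n=\dim L$.

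We first handle the dimension list. If $L$ is abelian, reducedness gives $Z(L)=L\subseteq L^2=0$, so $L=0$. We will read the one-dimensional abelian case as the degenerate allowed case, presumably excluded from or harmless in the intended reading. If $L$ is nonabelian, Lemma \ref{lemma_dimension_bounds} gives $\dim L^2\ge 3$ and $\dim L^2\le n-1$, so $n\ge 4$. Hence the possible dimensions are $1,4,5,6,7$, and dimensions $2$ and $3$ are excluded.

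For indecomposability, argue by contradiction. Suppose $L=I\oplus I^\perp$ with $I,I^\perp$ nonzero regular ideals. Each summand is again solvable and quadratic, since $\varphi$ restricts nondegenerately. Each summand is also reduced, because
\[
Z(L)=Z(I)\oplus Z(I^\perp), \qquad L^2=I^2\oplus (I^\perp)^2,
\]
so $Z(L)\subseteq L^2$ forces $Z(I)\subseteq I^2$, and similarly for $I^\perp$. A reduced abelian summand is zero, so both summands are nonabelian. By the dimension count above, each then has dimension at least $4$, giving $n\ge 8$. This contradicts $n\le7$. (With the one-dimensional case read as excluded, the same bound shows no summand can be one-dimensional.)

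The main point to verify is the claim that a reduced nonabelian solvable quadratic algebra has dimension $\ge4$. This follows from $3\le\dim L^2\le n-1$, where the upper bound uses solvability via $L^2\ne L$. Everything else is a routine check of how center and derived algebra split over an orthogonal direct sum of ideals.
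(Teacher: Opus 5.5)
Your proof is correct and is essentially the paper's argument: the paper takes the summand $I_1$ with $\dim I_1\le 3$, uses Lemma~\ref{lemma_dimension_bounds} together with $I_1^2\ne I_1$ to conclude that $I_1$ is abelian, and then contradicts reducedness because $I_1\subseteq Z(L)$ while $I_1\cap L^2=0$; you instead show that both summands inherit reducedness, so both are nonabelian of dimension at least $4$, which forces $\dim L\ge 8$ (the same ingredients in contrapositive order). Your remark that a reduced abelian algebra is zero is also right, and no hedging is needed there: the value $1$ in the dimension list is really accounting for $\mfr_1$, which is indecomposable but not reduced.
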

    
        \begin{proof}
            If $L$ is decomposable then $L=I_1\oplus I_2$ with both ideals regular and $I_1^\perp=I_2$. We may assume w.l.o.g. $\dim I_1\leq 3$, thus $I_1^2=0$ by applying Lemma \ref{lemma_dimension_bounds}. But this implies $I_1\subseteq Z(L)\setminus L^2$ contrary to the reduced assumption. This proves the indecomposability of $L$. The last assertion is easily checked from Lemma \ref{lemma_dimension_bounds} 
        \end{proof}
    
        \begin{remark}
            In Section \ref{section_constructions}, we construct indecomposable solvable quadratic Lie algebras of dimensions $4,5,6,7$.
            As a consequence, reduced quadratic Lie algebras exist in every dimension $d\geq 8$  over fields of characteristic different from $2$. This existence result was previously established in characteristic zero in \cite{tsou1962xi}.  
        \end{remark}

        Given $\delta_1,\delta_2\in\der_{\varphi}(L)$, we give a sufficient condition for the corresponding double extensions to be isometrically isomorphic. The result is just \cite[Proposition 2.11]{Favre_Santharoubane_1987} (see also \cite[Lemma 1.4]{duong2014solvable}). We include a straightforward proof sketch showing that the result remains valid over arbitrary fields of characteristic not $2$.
    
        \begin{proposition}\label{prop_iso_iso_reduction_one}
            Let $\lambda\in\mbF^*$, $x\in L$, $\delta\in \der_\varphi(L)$ and $\sigma\colon(L, \varphi)\to (L, \varphi)$ an isometric isomorphism. Then, the map $\delta(\lambda, x,\sigma)=\lambda\sigma^{-1}\delta\sigma+\ad\, x$ is a $\varphi$-skew derivation of $L$ and the double extensions $\mfd(L,\varphi,\delta)$ and $\mfd(L,\varphi,\delta(\lambda, x,\sigma))$ are isometrically isomorphic quadratic Lie algebras.
        \end{proposition}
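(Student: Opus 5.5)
First I check that $\delta' := \delta(\lambda,x,\sigma)=\lambda\sigma^{-1}\delta\sigma+\ad\, x$ is a $\varphi$-skew derivation. Conjugating a derivation by an automorphism gives a derivation. If $\sigma$ is an isometry, then $\varphi(\sigma^{-1}\delta\sigma(u),v)=\varphi(\delta\sigma(u),\sigma(v))=-\varphi(\sigma(u),\delta\sigma(v))=-\varphi(u,\sigma^{-1}\delta\sigma(v))$, so $\sigma^{-1}\delta\sigma$ is also $\varphi$-skew. Inner derivations lie in $\der_\varphi(L)$, and $\der_\varphi(L)$ is a subspace, so $\delta'\in\der_\varphi(L)$.

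I split the isometric isomorphism into three elementary ones and compose them: $\delta\mapsto\sigma^{-1}\delta\sigma$, then $\delta\mapsto\lambda\delta$, then $\delta\mapsto\delta+\ad\, x$.

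\emph{Conjugation.} I define $\Phi:\mfd(L,\varphi,\sigma^{-1}\delta\sigma)\to\mfd(L,\varphi,\delta)$ by $p\delta'+y+q\delta'^*\mapsto p\delta+\sigma(y)+q\delta^*$. It preserves $\varphi_\delta$ because $\sigma$ is an isometry. It preserves brackets because $\sigma\sigma^{-1}\delta\sigma=\delta\sigma$ and $\varphi(\sigma^{-1}\delta\sigma(y),z)=\varphi(\delta\sigma(y),\sigma(z))$.

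\emph{Scaling.} I map $p\delta'+y+q\delta'^*\mapsto \lambda p\,\delta+y+\lambda^{-1}q\,\delta^*$, where $\delta'=\lambda\delta$. The form $ps+qr$ is unchanged, since the factors $\lambda$ and $\lambda^{-1}$ cancel. For the bracket, the $L$-component is $p\lambda\delta(z)-r\lambda\delta(y)+[y,z]$ on both sides. The central term $\varphi(\lambda\delta(y),z)$ maps to $\lambda^{-1}\varphi(\lambda\delta(y),z)\delta^*=\varphi(\delta(y),z)\delta^*$, which matches.

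\emph{Inner perturbation.} This step needs a nontrivial correction and is the main computation. For $\delta'=\delta+\ad\, x$, I try
\[
\Psi(p\delta'+y+q\delta'^*)=p\delta+(y+p x)+\bigl(q-\varphi(x,y)-\tfrac{p^2}{2}\varphi(x,x)\bigr)\delta^*,
\]
i.e.\ $\delta'\mapsto \delta+x-\tfrac12\varphi(x,x)\delta^*$, $y\mapsto y-\varphi(x,y)\delta^*$, $\delta'^*\mapsto\delta^*$. The factor $\tfrac12$ is where $\operatorname{char}\mbF\neq2$ is used.

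\emph{Isometry of $\Psi$.} First, $\varphi_\delta(\Psi\delta',\Psi\delta')=-\varphi(x,x)+\varphi(x,x)=0$. Next, $\varphi_\delta(\Psi\delta',\Psi y)=\varphi(x,y)-\varphi(x,y)=0$. Also $\varphi_\delta(\Psi\delta',\Psi\delta'^*)=1$, and the pairings within $L$ are unchanged since $\delta^*$ is isotropic and orthogonal to $L$.

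\emph{Brackets for $\Psi$.} Since $\Psi$ is linear and $\delta'^*$, $\delta^*$ are central, it suffices to check pairs of basis elements.
For the pair $\delta'$ and $y$, we have $[\Psi\delta',\Psi y]=\delta(y)+[x,y]+\varphi(\delta(x),y)\delta^*$. On the other side, $\Psi(\delta'(y))=\delta(y)+[x,y]-\varphi(x,\delta(y)+[x,y])\delta^*$. Skewness gives $-\varphi(x,\delta(y))=\varphi(\delta(x),y)$, and invariance gives $\varphi(x,[x,y])=\varphi([x,x],y)=0$, so the two agree.
For the pair $y$ and $z$, we have $\Psi([y,z]+\varphi(\delta'(y),z)\delta'^*)=[y,z]-\varphi(x,[y,z])\delta^*+\varphi(\delta(y),z)\delta^*+\varphi([x,y],z)\delta^*$. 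The two $\varphi$-terms involving $x$ cancel by invariance, leaving $[\Psi y,\Psi z]$.

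Bijectivity of each map is clear, since each is triangular with invertible diagonal. Composing the three isometric isomorphisms proves the statement.
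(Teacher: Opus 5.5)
Your proof is correct and is essentially the paper's argument: composing your three maps gives exactly the paper's isometry $g$, namely $\delta(\lambda,x,\sigma)\mapsto \lambda\delta+\sigma(x)-\frac{\varphi(x,x)}{2\lambda}\delta^*$, $y\mapsto\sigma(y)-\frac{\varphi(x,y)}{\lambda}\delta^*$, $\delta(\lambda,x,\sigma)^*\mapsto\lambda^{-1}\delta^*$; factoring it into conjugation, scaling and inner perturbation simply spells out the verification that the paper calls ``readily checked''. One slip to fix: in your displayed formula for $\Psi$ the $\delta^*$-coefficient should be $q-\varphi(x,y)-\frac{p}{2}\varphi(x,x)$, not $\frac{p^2}{2}\varphi(x,x)$, since $\Psi$ is linear and your basis-level description $\delta'\mapsto\delta+x-\frac12\varphi(x,x)\delta^*$ forces this.
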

        \begin{proof}
            One can readily check that $\delta(\lambda,x,\sigma)$ is a $\varphi$-skew derivation and the map $g\colon\mfd(L,\varphi,\delta(\lambda, x)))\to\mfd(L,\varphi,\delta)$ given by 
            \[
                g(\delta(\lambda,x,\sigma))=\lambda\delta+\sigma(x)-\frac{\varphi(x,x)}{2\lambda}\delta^*,\  g(y)=\sigma(y)-\frac{\varphi(x,y)}{\lambda}\delta^* \quad \forall y\in L,
            \]
            and $g(\delta(\lambda,x,\sigma)^*)=\frac{\delta^*}{\lambda}$ is an isometric isomorphism.
        \end{proof}

        By Proposition~\ref{prop_iso_iso_reduction_one}, any double extension by an inner derivation is isometrically isomorphic to the double extension defined by the zero (inner) derivation $\ad_0:=\ad_L0$. But $D=\mfd(L,\varphi,\ad_0)$ satisfies $\ad_0,\,\ad_0^*\in Z(D)$ and $\ad_0\notin D^2$.
        So $D$ is non-reduced and therefore decomposable from \ref{item_indecomposable_reduced}. Consequently, only \textit{outer}  derivations (derivations that are not inner) need to be considered in the classification of indecomposable quadratic Lie algebras.

        We also state a necessary condition for the existence of isomorphisms and isometric isomorphisms, which is particularly useful in low-dimensional cases.

        \begin{proposition}\label{prop_necessary_isomorphism}
            Let $(L_i,\varphi_i)$, $i=1,2$, be quadratic Lie algebras, and let $D_i=\mfd(L_i,\varphi_i, \delta_i)$. If $D_1$ and $D_2$ are reduced and (isometrically) isomorphic, then the Lie algebras $D_1^2/Z(D_1)$ and $D_2^2/Z(D_2)$ are (isometrically) isomorphic. And so are their derivations algebras    $(\der_{\varphi_i}(D_i))$ $\der (D_i)$.
        \end{proposition}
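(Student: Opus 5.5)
The statement is essentially functorial, so I would prove it by checking that an (isometric) isomorphism $f\colon D_1\to D_2$ carries every ingredient of $D_1^2/Z(D_1)$ to the corresponding ingredient for $D_2$. First, any Lie algebra isomorphism $f$ satisfies $f(D_1^2)=[f(D_1),f(D_1)]=D_2^2$ and $f(Z(D_1))=Z(D_2)$, since both the derived algebra and the center are characteristic. Because $D_i$ is reduced, $Z(D_i)\subseteq D_i^2$, so $Z(D_i)$ is an ideal of the Lie algebra $D_i^2$. The quotient $D_i^2/Z(D_i)$ is therefore a well-defined Lie algebra, and $f$ induces a Lie isomorphism $\bar f\colon D_1^2/Z(D_1)\to D_2^2/Z(D_2)$, $\bar f(x+Z(D_1))=f(x)+Z(D_2)$. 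Here the reduced hypothesis is exactly what makes the quotient live inside $D_i^2$.

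For the isometric version I would use \ref{item_center_derived}: $Z(D_i)=(D_i^2)^\perp$. Hence the radical of $\varphi_{D_i}\vert_{D_i^2}$ is $D_i^2\cap (D_i^2)^\perp=D_i^2\cap Z(D_i)=Z(D_i)$, again using reducedness. So $\varphi_{D_i}$ descends to a nondegenerate symmetric form $\bar\varphi_i(x+Z,y+Z)=\varphi_{D_i}(x,y)$ on $D_i^2/Z(D_i)$. Invariance is inherited because the bracket on the quotient is induced from $D_i$. Thus $(D_i^2/Z(D_i),\bar\varphi_i)$ is a quadratic Lie algebra. If $f$ is an isometry, then $\bar\varphi_2(\bar f\bar x,\bar f\bar y)=\varphi_{D_2}(fx,fy)=\varphi_{D_1}(x,y)=\bar\varphi_1(\bar x,\bar y)$, so $\bar f$ is an isometric isomorphism.

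For the derivation algebras, conjugation $\delta\mapsto f\delta f^{-1}$ gives a Lie algebra isomorphism $\der(D_1)\to\der(D_2)$: one checks directly that $f\delta f^{-1}$ is a derivation, and the map preserves commutators. If $f$ is an isometry, then $\varphi_{D_2}(f\delta f^{-1}x,y)=\varphi_{D_1}(\delta f^{-1}x,f^{-1}y)=-\varphi_{D_1}(f^{-1}x,\delta f^{-1}y)=-\varphi_{D_2}(x,f\delta f^{-1}y)$. So skew derivations go to skew derivations, and the same conjugation restricts to $\der_{\varphi_1}(D_1)\cong\der_{\varphi_2}(D_2)$. The same argument applied to $\bar f$ gives the isomorphism of the derivation algebras of the quotients, if that reading of the statement is intended.

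I do not expect a genuine obstacle. The only point requiring care is to check that $Z(D_i)$ is precisely the radical of $\varphi\vert_{D_i^2}$, so that the induced form is nondegenerate, and this is exactly where \ref{item_center_derived} and the reduced hypothesis enter. The double-extension structure of $D_i$ plays no role beyond naming the algebras.
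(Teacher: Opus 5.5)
Your argument is correct and takes the same approach as the paper. The paper's proof simply notes that centers and derived algebras are preserved by (isometric) isomorphisms and that $D_i^2/Z(D_i)$ is quadratic. You supply the details it leaves implicit: $Z(D_i)=(D_i^2)^\perp$ together with reducedness makes $Z(D_i)$ exactly the radical of $\varphi_{D_i}\vert_{D_i^2}$, and conjugation by $f$ or $\bar f$ transports (skew) derivations under either reading of the last sentence.
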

        \begin{proof}
            The result follows immediately by taking into account that $D_i^2/Z(D_i)$ are quadratic Lie algebras and centers and derived subalgebras are invariant ideals under isomorphisms and isometric isomorphisms of Lie algebras.  
        \end{proof}
        Most low-dimensional quadratic Lie algebras arise as double extensions of an abelian quadratic Lie algebra, namely an orthogonal vector space $(V,\varphi)$. In this setting $\der_\varphi(V)=\mfso(V,\varphi)$ and all inner derivations vanish. Thus, the outer derivations are precisely the nonzero elements of $\mfso(V,\varphi)$. Any double extension $\mfd(V,\varphi, \delta)$ determined by an outer derivation $\delta$ satisfies
        \[
            Z(\mfd(V,\varphi, \delta))=\ker \delta\oplus\mbF\cdot \delta^*\text{\ and\ }(\mfd(V,\varphi, \delta))^2=\im \delta\oplus\mbF\cdot \delta^*,\text{\ so}
        \]
        \begin{enumerate}[label=(P\arabic*)]
            \setcounter{enumi}{6}
            \item $\mfd(V,\varphi, \delta)$ is reduced if and only if $\ker \delta \subseteq\im \delta\neq 0$.
        \end{enumerate}

        To simplify our constructions, we use canonical forms of pairs $(\delta, \varphi)$, with $0\neq\delta \in \mfso(V,\varphi)$. A unified treatment of this classical spectral-theoretic problem over arbitrary fields is presented in \cite{caalim2020isometric} and \cite{benito2024skew}. Throughout the paper, we adopt the notation and results in the latter reference. 
    
        Let $(V,\varphi)$ be an orthogonal $\mbF$-vector space, $f: V\to V$ a linear map, and $V=V_{\pi_1}\oplus \cdots\oplus V_{\pi_r}=V_0 \oplus\bigl(\oplus_{\pi_i\neq x}V_{\pi_i}\bigr)$ the \emph{primary decomposition} associated to the irreducible decomposition of the minimal polynomial of $f$, $m_f(x)=\pi_1^{k_1}(x)\cdots\pi_r^{k_r}(x)$, with $\pi_i(x)$ monic, $k_i\geq 1$ and $V_{\pi_i}=\ker \pi_i^{k_i}(f)$. Let $J_n(\lambda)$ denote the Jordan $n$-by-$n$ canonical block and $C(\pi(x))$ the companion matrix of a given monic polynomial $\pi(x)=x^n-a_{n-1}x^{n-1}-\dots-a_1x-a_0$. Therefore,
        \begin{equation}\label{eq_jordan_canonical_blocks}
	       J_n(\lambda):=\left(\begin{smallmatrix}
		        \lambda & 1       &        & 0       \\
		        0       & \lambda & \ddots & 0       \\
		                &         & \ddots & 1       \\
		        0       & 0       &        & \lambda
	       \end{smallmatrix}\right)^t\!\!,\quad 
	       C(\pi(x)):=\left(\begin{smallmatrix}
		        0 &        & 0 & a_0     \\
		        1 & \ddots &   & a_1     \\
		           & \ddots & 0 & \vdots  \\
		        0 &        & 1 & a_{n-1}
	        \end{smallmatrix}\right)
        \end{equation}
        We also fix matrices for symmetric forms of maximal Witt index:
        \begin{equation}\label{eq_isot_zer_vap_Caalim}
		      B_{2n}:=\left(\begin{smallmatrix}
				  &    &         & 1 \\
				  &    & \iddots &   \\
				  & 1 &         &   \\
				1 &    &         &\end{smallmatrix}\right)\text{\ and\ } 
            B_{2n+1}(\pm):=\left(\begin{smallmatrix}
				  &    &         & 1 \\
				  &    & -1&   \\
				  & \iddots &         &   \\
				1 &    &         &\end{smallmatrix}\right).
	    \end{equation}
        The coordinate expressions of these forms are:
        \begin{equation}
            \sum\limits_{\substack{1\leq i<j\leq 2n\\ i+j=2n+1}}2x_ix_j\quad\text{\ and\ }\sum\limits_{\substack{1\leq i<j\leq 2n+1\\ i+j=2n+2}}(-1)^{i+1}2x_ix_j+(-1)^nx_{n+1}^2.
        \end{equation}

        Assume next $(V,\varphi)$ is an orthogonal vector space and $\delta\in \mfso(V, \varphi)$. Following Section 2 in \cite{benito2024skew}, the set of roots (in any splitting field) of $m_\delta(x)$ is symmetric with respect to zero, i.e., $\{\lambda: m_\delta (\lambda)=0\}=-\{\lambda: m_\delta (\lambda)=0\}$ even more:
    
        \begin{proposition}[\cite{benito2024skew}]\label{prop_skew_perp_decomposition}
            Let $(V,\varphi)$ be an orthogonal $\mbF$-vector space, $\delta\in \mfso(V, \varphi)$ and $m_\delta(x)=\pi_1^{k_1}(x)\cdots \pi_r^{k_r}(x)$ with $\pi_i(x)\in\mbF[x]$ distinct irreducibles and associated primary decomposition $\bigoplus_{i=1}^r\ker \pi_i(f)^{k_i}$. Then there is a permutation $\sigma\in S_r$, $\sigma^2=\id$, defined by $\sigma(i)=j$ if and only if $\gcd(\pi_i(x),\pi_j(-x))= \pi_i(x)$. Then $V_{\pi_i}^\perp=\bigoplus_{j\neq\sigma(i)}V_{\pi_j}$, moreover, if $\sigma(i)=i$ then $V_{\pi_i}$ is a regular $f$-invariant subspace and if $j=\sigma(i)\neq i$ then $V_{\pi_i}\oplus V_{\pi_j}$ is a regular $f$-invariant subspace with totally isotropic summands.
        \end{proposition}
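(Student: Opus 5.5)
The plan is to derive everything from one orthogonality identity. Since $\delta\in\mfso(V,\varphi)$, we have $\varphi(p(\delta)x,y)=\varphi(x,p(-\delta)y)$ for every polynomial $p(x)\in\mbF[x]$ and all $x,y\in V$. This holds by induction on the degree, using $\varphi(\delta x,y)=-\varphi(x,\delta y)$. Throughout, write $f=\delta$. The first step is to check that $\sigma$ is a well-defined involution of $\{1,\dots,r\}$.
\begin{itemize}
\item If $\pi_i(x)$ is a monic irreducible factor of $m_\delta(x)$, then so is $\tilde\pi_i(x):=(-1)^{\deg\pi_i}\pi_i(-x)$. This holds because $m_\delta(x)$ and $(-1)^{\deg m_\delta}m_\delta(-x)$ coincide: the adjoint of $\delta$ is $-\delta$, and adjoint maps share their minimal polynomial since $\varphi$ is nondegenerate.
\item Since $\tilde\pi_i$ is irreducible and monic, $\tilde\pi_i=\pi_j$ for a unique $j$. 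This is exactly the condition $\gcd(\pi_i(x),\pi_j(-x))=\pi_i(x)$ up to the obvious normalization.
\item Applying the transformation twice gives back $\pi_i$, so $\sigma^2=\id$.
\end{itemize}

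The main step is to compute the orthogonal complements of the primary components. Take $i$ and $j$ with $j\neq\sigma(i)$. Then $\pi_i(x)^{k_i}$ and $\pi_j(-x)^{k_j}$ are coprime, because $\pi_j(-x)$ is, up to sign, the irreducible $\pi_{\sigma(j)}\neq\pi_i$. So there are polynomials $a,b$ with
\[
a(x)\pi_i(x)^{k_i}+b(x)\pi_j(-x)^{k_j}=1.
\]
Now let $u\in V_{\pi_i}$ and $v\in V_{\pi_j}$. Evaluating at $f$ gives $u=b(f)\pi_j(-f)^{k_j}u$, since $\pi_i(f)^{k_i}u=0$. Hence, by the identity above,
\[
\varphi(u,v)=\varphi\bigl(b(f)u,\pi_j(f)^{k_j}v\bigr)=0 .
\]
Here we used that $b(f)$ commutes with $\pi_j(-f)^{k_j}$, whose adjoint is $\pi_j(f)^{k_j}$. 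This shows $\bigoplus_{j\neq\sigma(i)}V_{\pi_j}\subseteq V_{\pi_i}^\perp$.

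Next we get equality by a dimension count. Nondegeneracy of $\varphi$ gives $\dim V_{\pi_i}^\perp=\dim V-\dim V_{\pi_i}$. The inclusion then forces $\dim V_{\pi_{\sigma(i)}}\geq\dim V_{\pi_i}$. The symmetric argument, valid because $\sigma$ is an involution, gives the reverse inequality. So the two dimensions are equal, and the inclusion is an equality: $V_{\pi_i}^\perp=\bigoplus_{j\neq\sigma(i)}V_{\pi_j}$.

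The remaining claims follow from this description of the complements.
\begin{itemize}
\item If $\sigma(i)=i$, then $V_{\pi_i}\cap V_{\pi_i}^\perp=0$ by directness of the primary decomposition, so $V_{\pi_i}$ is regular.
\item If $j=\sigma(i)\neq i$, then $V_{\pi_i}\subseteq V_{\pi_i}^\perp$ because $i\neq\sigma(i)$. So $V_{\pi_i}$ is totally isotropic, and likewise $V_{\pi_j}$.
\item In that case $(V_{\pi_i}\oplus V_{\pi_j})^\perp=\bigoplus_{k\neq i,j}V_{\pi_k}$ meets $V_{\pi_i}\oplus V_{\pi_j}$ trivially, which gives regularity of the sum.
\item $f$-invariance is automatic for primary components.
\end{itemize}

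The only delicate step is the coprimality and normalization in the main step, that is, matching $\pi_j(-x)$ correctly with $\pi_{\sigma(j)}$ up to the sign $(-1)^{\deg}$. Everything else is routine linear algebra valid in characteristic $\neq2$.
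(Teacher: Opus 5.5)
Your proof is correct and complete. The adjoint identity $\varphi(p(\delta)x,y)=\varphi(x,p(-\delta)y)$ and the relation $m_\delta(x)=(-1)^{\deg m_\delta}m_\delta(-x)$ make $\sigma$ a well-defined involution. Bezout for the coprime pair $\pi_i(x)^{k_i}$, $\pi_j(-x)^{k_j}$ with $j\neq\sigma(i)$ gives the inclusion $\bigoplus_{j\neq\sigma(i)}V_{\pi_j}\subseteq V_{\pi_i}^\perp$. The two-sided dimension count, which uses $\sigma^2=\id$, upgrades that inclusion to equality, and the regularity and isotropy claims then follow formally.

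The paper itself gives no proof of this proposition; it is quoted from \cite{benito2024skew} and used as a black box, so there is no in-paper argument to compare with. Your route supplies a short self-contained justification that needs only nondegeneracy and the Euclidean algorithm in $\mbF[x]$. Your dimension count makes explicit the equality $\dim V_{\pi_i}=\dim V_{\pi_{\sigma(i)}}$ behind the word ``equidimensional'' in \ref{item_V2}. Unique factorization of $m_\delta(x)=\prod_i\tilde\pi_i(x)^{k_i}$ also gives $k_{\sigma(i)}=k_i$ for free.

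As a minor aside, none of your steps actually uses that the characteristic is different from $2$. In characteristic $2$ one simply has $\tilde\pi_i=\pi_i$, hence $\sigma=\id$, and all primary components are regular and mutually orthogonal.
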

        \noindent For any $\delta\in \mfso(V, \varphi)$, the main facts that follows from Proposition \ref{prop_skew_perp_decomposition} are:
        \begin{enumerate}[label=(P\arabic*)]
            \setcounter{enumi}{7}
            \item\label{item_V_decomposition} $V=V_x\perp V_1\perp V_2$ where $V_x, V_1$ and $V_2$ are regular subspaces. 
            \item\label{item_V1} $V_1$  is the orthogonal sum of the primary components $V_{\pi}$ given by irreducible polynomials $\pi(x)\neq x$ such that  $\pi(\lambda)=0$ iff $\pi(-\lambda)=0$.
            \item\label{item_V2} $V_2$ decomposes as orthogonal sum of regular subspaces $W=V_{\pi_1}\oplus V_{\pi_2}$ with two equidimensional and totally isotropic primary components such that $\pi_1(\lambda)=0$ iff $\pi_2(-\lambda)=0$.
        \end{enumerate}

        \noindent The following result decomposes each primary component of the subspace $V_1$ described in \ref{item_V1}, into smaller orthogonal regular cyclic subspaces.
    
        \begin{lemma}\label{lemma_V1_decomposition}
            Let $(V,\varphi)$ be an orthogonal $\mbF$-vector space, $\delta\in \mfso(V, \varphi)$ and $m_\delta(x)=\pi(x)^k$ for some irreducible polynomial $\pi(x)\neq x$. There exists a polynomial $q(x)\in\mbF[x]$ such that $\pi(x)=q(x^2)$ and $\pi(0)=q(0)\neq 0$. Moreover, $V=\bigoplus_{i=1}^r\mbF[x](\delta)(v_i)$ where each summand is a regular $\delta$-invariant subspace.
        \end{lemma}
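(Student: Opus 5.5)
The first claim follows from the symmetry of roots. Since $m_\delta(x)=\pi(x)^k$ with $\pi$ irreducible and $\pi(x)\neq x$, the permutation $\sigma$ of Proposition \ref{prop_skew_perp_decomposition} acts on a one-element set and so fixes $\pi$. Hence $\pi(x)$ divides $\pi(-x)$. Both polynomials have the same degree $n$, and the leading coefficient of $\pi(-x)$ is $(-1)^n$, so $\pi(-x)=(-1)^n\pi(x)$.

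If $n$ were odd, $\pi$ would be an odd polynomial and $\pi(0)=0$, so $x\mid\pi$. Irreducibility would then force $\pi(x)=x$, which is excluded. So $n$ is even and $\pi(-x)=\pi(x)$. In characteristic $\neq 2$ this means all odd-degree coefficients vanish, so $\pi(x)=q(x^2)$. Finally $\pi(0)=q(0)\neq 0$, again because $x\nmid\pi$.

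For the decomposition I will induct on $\dim V$. I look for a vector $v$ whose cyclic subspace $W=\mbF[x](\delta)(v)$ is regular. Then $W^\perp$ is $\delta$-invariant (since $\delta$ is $\varphi$-skew), regular, and satisfies $V=W\perp W^\perp$. The restriction of $\delta$ to $W^\perp$ is again $\varphi$-skew with minimal polynomial a power of $\pi$, so induction finishes.

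The key tool is the adjoint of polynomials in $\delta$. Skewness gives $p(\delta)^*=p(-\delta)$ with respect to $\varphi$, and for $\pi^j$ this is $\pi(-\delta)^j=\pi(\delta)^j$, since $\pi$ is even. So every $\pi(\delta)^j$ is $\varphi$-selfadjoint.

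To find $v$, I first choose $u$ whose annihilator is $\pi^k$, i.e. a vector of maximal period. I then want $\varphi(\pi(\delta)^{k-1}u, U)\neq 0$, where $U=\mbF[x](\delta)(u)$. The form $\psi(a,b)=\varphi(\pi(\delta)^{k-1}a,b)$ is symmetric by selfadjointness. It is nonzero: otherwise $\im\pi(\delta)^{k-1}\subseteq V^\perp=0$, contradicting $m_\delta=\pi^k$. In characteristic $\neq 2$ a nonzero symmetric form has an anisotropic vector, so I pick $u$ with $\varphi(\pi(\delta)^{k-1}u,u)\neq 0$. Such a $u$ automatically has period $\pi^k$.

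The main obstacle is then showing that $U$ is regular. My plan is to consider the pairing $U\times U\to\mbF$. Its radical $R=U\cap U^\perp$ is a $\delta$-invariant submodule of the cyclic module $U\cong\mbF[x]/(\pi^k)$. If $R\neq 0$, it contains the socle $\pi(\delta)^{k-1}\mbF[x](\delta)u$, since every nonzero submodule of $\mbF[x]/(\pi^k)$ contains the unique minimal one. In particular $\pi(\delta)^{k-1}u\in R$, so $\varphi(\pi(\delta)^{k-1}u,u)=0$, a contradiction. Hence $R=0$ and $U$ is regular, completing the induction step. If the choice of anisotropic $u$ turns out to need more care, the fallback is to work with the Hermitian form over the field $\mbF[x]/(\pi)$, as in \cite{benito2024skew}.
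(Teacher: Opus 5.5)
Your proof is correct and uses the same mechanism as the paper's. The ingredients match: $\pi(\delta)^{k-1}$ is $\varphi$-selfadjoint because $\pi$ is even; you need a vector $v$ with $\varphi(\pi(\delta)^{k-1}v,v)\neq 0$; a nonzero radical of a cyclic subspace of period $\pi^k$ must contain $\pi(\delta)^{k-1}v$; and you induct on the orthogonal complement.

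The difference is packaging. The paper starts from a cyclic decomposition $\bigoplus_i \mbF[x](\delta)(v_i)$ and finds the required vector by hand among $v_1$, $v_i$ and $(v_1+v_i)/2$, which is a polarization step. You obtain it directly as an anisotropic vector of the nonzero symmetric form $\psi$. That avoids the structure theorem and the case analysis, so your version is somewhat cleaner.
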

        \begin{proof}
            We must have $\gcd(\pi(x),\pi(-x))=\pi(x)$, so $\pi(-x)=\lambda \pi(x)$. Since the coefficients of $\pi(-x)$ are just those of $\pi(x)$ or their opposites we must have $\lambda=\pm 1$. Therefore the monomials of $\pi(x)$ have the same parity. If they are all odd, then $\pi(x)$ must have at least degree $3$ since $\pi(x)\neq x$, but then $x\mid p(x)$. Therefore $\pi(x)$ has only even powers and $\pi(x)=q(x^2)$ for some other polynomial $q(x)$, thus $p(0)=q(0)\neq 0$.
        
            By using the theory for finitely generated modules over PID's, $V$ decomposes as $V=\bigoplus_{i=1}^r\mbF[x](\delta)(v_i)$, where the minimum polynomial of every $v_i\in V$ is $m_{v_i}(x)=\pi(x)^{d_i}$ and $k=d_1\geq d_2\geq\cdots\geq d_r\geq 1$. Denote $\mbF[x](\delta)(v_i)$ as $V_i$. If $V_1$ is not a regular subspace, we can take $0\neq w\in V_1\cap V_1^\perp$, so there are polynomials $r_i(x)$ with less degree than $\pi(x)$ or $r_i(x)=0$ such that $w=\sum_{i=0}^{k-1}\pi(\delta)^ir_i(\delta)(v_1)$; if $j$ is the minimum index such that $r_j(x)\neq 0$, we have $0\neq w'=\pi(\delta)^{k-j-1}(w)=\pi(\delta)^{k-1}r_j(\delta)(v_1)\in V_1\cap\im \pi(\delta)^{k-1}$ and $\varphi(w',t(\delta)(v_1))=\varphi(w,\pi(\delta)^{k-j-1}t(\delta)(v_1))=0$ so $w'\in V_1\cap V_1^\perp\cap \im \pi(\delta)^{k-1}$.  Using the same reasoning, $\mbF[x](\delta)(w')\perp V_1$, so we may take $w'=\pi(\delta)^{k-1}(v_1)$. Thus, there must be $v_i$ with $v_i\not\in\ker \pi(\delta)^{k-1}$, equivalently $m_{v_i}(x)=\pi(x)^k$, such that $w'\not\in V_i^\perp$. If $V_i$ is a regular subspace, we have found an indecomposable regular subspace. In other case, there is $r(x)$ with less degree than $\pi(x)$ such that $\varphi(w',r(\delta)(v_i))\neq 0$. In this case $m_{r(\delta)(v_i)}(x)=m_{v_i}(x)$, therefore me may assume that $\varphi(w',v_i)=2$ after changing names and rescaling. Consider now $v=v_1/2+v_i/2$ and take $u=\pi(\delta)^{k-1}(v)$. Then $\varphi(u,v)=1/4(\varphi(\pi(\delta)^{k-1}(v_1),v_i)+\varphi(\pi(\delta)^{k-1}(v_i),v_1))=1\neq 0$ so $\mbF[x](\delta)(\pi(\delta)^{k-1}(v))\not\perp\mbF[x](\delta)(v)$ and thus $\mbF[x](\delta)(v)$ must be a regular subspace and $m_v(x)=\pi(x)^k$. In any case we may find $W=\mbF[x](\delta)(v)$ regular with $m_v(x)=\pi(x)^k$. Now $W^\perp$ is $\delta$-invariant because $W$ is $\delta$-invariant, and by induction in the dimension of $V$ we can conclude the result.
        \end{proof}

        \noindent Finally, according to \cite[Theorem 2.5]{benito2024skew}, we have:
        \begin{enumerate}[label=(P\arabic*)]
            \setcounter{enumi}{10}
            \item\label{item_V_x} The primary component $V_x$ decomposes as an orthogonal sum of regular invariant cyclic subspaces with minimum polynomial $x^{2n+1}$, and regular invariant subspaces $W=W_1\oplus W_2$, where $W_1$ and $W_2$  are totally isotropic cyclic subspaces with minimum polynomial $x^{2n}$.\item\label{item_isotropic_orthogonal} For each nonzero eigenvalue $\lambda\neq 0$ of $\delta$, $-\lambda$ is also an eigenvalue and $V_{x-\lambda}$ and $V_{x+\lambda}$ are totally isotropic. Moreover, $V_{x-\lambda}\oplus V_{x+\lambda}$ decomposes as an orthogonal sum of regular subspaces $W=W_1\oplus W_2$ where $W_1$ and $W_2$ are totally isotropic cyclic subspaces with minimum polynomials $(x-\lambda)^k$ and $(x+\lambda)^k$, respectively.
        \end{enumerate} 
        Combining facts \ref{item_V_decomposition}–\ref{item_V2}, one readily obtains the elementary divisor structures of skew-symmetric endomorphisms $\delta\colon V\to V$ of low-dimensional orthogonal spaces $(V,\varphi)$ satisfying $\ker\delta\subseteq\im \delta \neq 0$. They are listed in Table~\ref{table_divisors_structure}.
        \begin{table}[h]
            \scriptsize
            \centering
            \begin{tabularx}{\textwidth}{>{\centering\arraybackslash}XXX}
                \toprule
                {\bf Dimension of $V$}  & \multicolumn{2}{l}{\bf Elementary divisors of $\delta\in \mfso(V, \varphi)$}  \\ 
                \midrule 
                \multirow{2}{*}{2}	&  $(x-\lambda),(x+\lambda)$	&	$\lambda\in \mbF^*$\\
   	            &  $(x^2-\lambda)$	&	$\lambda\not\in\mbF^2$\\[1mm]
                \hdashline[1pt/1pt]
                &&\\[-2mm]
                3 	&  $x^3$&\\[1mm]
                \hdashline[1pt/1pt]
                &&\\[-2mm]
                \multirow{8}{*}{4}	&  $x^2, x^2$&\\
                &   $(x-\lambda),(x+\lambda), (x-\mu),(x+\mu)$	&$\lambda, \mu\in \mbF^*$\\
                &   $(x-\lambda)^2,(x+\lambda)^2$	&$\lambda\in \mbF^*$\\
                &   $(x-\lambda),(x+\lambda), (x^2-\mu)$	&$\lambda\in \mbF^*, \mu\notin\mbF^2$\\
                &   $(x^2-\lambda),(x^2-\mu)$	&$ \lambda,\mu\notin\mbF^2$\\
                &   $(x^2-\lambda)^2$	&$\lambda\notin\mbF^2$\\
                & $(x^2+\lambda x+\mu), (x^2-\lambda x+\mu)$    & $\lambda\in\mbF^*,\quad \lambda^2-4\mu\not\in\mbF^2$\\
                & $x^4-\lambda x^2+\mu$	& $\mu,\lambda\pm2\sqrt{\mu}, \lambda^2-4\mu\notin\mbF^2$\\[1mm]
                \hdashline[1pt/1pt]
                &&\\[-2mm]
                \multirow{3}{*}{5} &  $x^5$&\\
                & $x^3, (x-\lambda),(x+\lambda)$	&$\lambda\notin\mbF^*$\\
                & $x^3, (x^2-\lambda)$	&$\lambda\notin\mbF^2$\\
                \bottomrule
            \end{tabularx}
            \caption{Elementary divisors of skew maps of $(V,\varphi)$.}
            \label{table_divisors_structure}
        \end{table}

        Since $\dim(V)\le 5$ suffices to obtain all quadratic Lie algebras of dimension at most $7$, Propositions~\ref{prop_skew_perp_decomposition} and Lemma~\ref{lemma_V1_decomposition} yield suitable bases of $V$ in which any pair $(\delta,\varphi)$, with at most two elementary divisors, is represented by one of the matrix pairs $(M(\delta), M(\varphi))$ listed in  Table~\ref{table_matrices}. The matrix pairs involve the matrices $J_n(\lambda)$, $C(\pi(x))$, $B_{2n}$ and $B_{2n+1}(\pm)$ described in \eqref{eq_jordan_canonical_blocks} and  \eqref{eq_isot_zer_vap_Caalim} and the $4$-parameter regular matrices $M(\lambda, \mu)(\gamma, \nu)$,    \begin{equation}\label{eq_4_parametric_matrix}
            M_{\lambda, \mu}(\gamma, \nu):=\left(\begin{smallmatrix}
                \gamma&0&-\nu&0\\0&\nu&0&\gamma\mu+\nu\lambda\\ -\nu&0&-\gamma\mu-\nu\lambda&0\\ 0&\gamma\mu+\nu\lambda&0&\gamma\lambda\mu+\nu\lambda^2-\nu\mu
            \end{smallmatrix}\right).
        \end{equation}
        The information in the Table~\ref{table_matrices} for orthogonal subspaces of dimension at most $5$ follows from Theorem~2.5 in \cite{benito2024skew} for $V_x$, and for $V_1$ and $V_2$ whenever $m_\delta(x)$ splits over the ground field. The remaining two cases are covered by \ref{item_V_x} and \ref{item_isotropic_orthogonal} and the following lemma.

        \begin{lemma}\label{lemma_table2_proof}
            Let $(V,\varphi)$ be a $4$-dimensional orthogonal space, and let $\delta\in\mfso(V,\varphi)$ such that its minimum polynomial, $m_\delta(x)$, does not split linearly over $\mbF$. Then $m_\delta(x)\in \{(x^2-\lambda),(x^2-\lambda)(x^2-\mu),(x^2-\lambda)^2, (x^2+\lambda x+\mu)(x^2-\lambda x+\mu), x^4-\lambda x^2+\mu\}$ and the attached pairs $(M(\delta), M(\varphi))$ are those included in Table~\ref{table_matrices}.       
        \end{lemma}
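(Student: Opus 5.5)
The plan has two stages. First I will pin down the admissible minimal polynomials from the symmetry of the roots. Then, for each case, I will construct a basis in which the pair $(\delta,\varphi)$ takes one of the forms of Table~\ref{table_matrices}.

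\emph{Minimal polynomials.} Since $m_\delta(x)$ does not split, it has an irreducible factor $\pi(x)$ of degree $\geq 2$. Hence $\pi(x)\neq x$ and $\dim V_\pi\geq 2$. I split into the cases $\sigma(\pi)=\pi$ and $\sigma(\pi)\neq\pi$ of Proposition~\ref{prop_skew_perp_decomposition}.

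If $\sigma(\pi)=\pi$, Lemma~\ref{lemma_V1_decomposition} gives $\pi(x)=q(x^2)$ with $q(0)\neq 0$, so $\deg\pi\in\{2,4\}$.
\begin{itemize}
\item If $\deg\pi=4$, then $m_\delta=\pi=x^4-\lambda x^2+\mu$. Irreducibility forces $\lambda^2-4\mu\notin\mbF^2$, since otherwise $q$ factors. It also forces $\mu\notin\mbF^2$ and $\lambda\pm2\sqrt\mu\notin\mbF^2$, since otherwise $x^4-\lambda x^2+\mu$ factors as $(x^2+ax+b)(x^2-ax+b)$ with $b^2=\mu$ and $2b-a^2=-\lambda$.
\item If $\deg\pi=2$, then $\pi=x^2-\lambda$ with $\lambda\notin\mbF^2$. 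The remaining $2$-dimensional regular complement $V_\pi^\perp$ carries a skew map that is either $0$ (so $m_\delta=x^2-\lambda$), or has divisors $(x-\nu),(x+\nu)$, or $x^2-\mu$. A nilpotent skew map on a regular plane must vanish: by \ref{item_V_x} it would need a cyclic block with minimal polynomial $x^3$, or a pair of blocks with $x^{2n}$, impossible in dimension $2$. Otherwise $V_\pi$ is $4$-dimensional and $m_\delta\in\{(x^2-\lambda)^2,\,x^2-\lambda\}$.
\end{itemize}
The case of divisors $(x-\nu),(x+\nu)$ is excluded by the hypothesis as stated in the list, or else it is treated via the split tables; I will check which reading the authors intend.

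If $\sigma(\pi)\neq\pi$, then $V_\pi\oplus V_{\pi(-x)}$ is regular with totally isotropic summands of equal dimension. It must therefore be all of $V$ with $\dim V_\pi=2$, $\pi=x^2+\lambda x+\mu$ and $\lambda\neq 0$, giving $m_\delta=(x^2+\lambda x+\mu)(x^2-\lambda x+\mu)$.

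\emph{Normal forms.} For each case I will pick a cyclic vector and compute Gram matrices.
\begin{itemize}
\item \textbf{Two isotropic summands.} Take $W_1=V_\pi$ with basis $v,\delta v$, on which $\delta$ acts by $C(\pi)$. Since $\varphi$ pairs $W_1$ with $W_2$ nondegenerately, I choose in $W_2$ the dual basis to $v,\delta v$. Skewness then forces $\delta|_{W_2}=-C(\pi)^t$, which in a suitable reordering is $C(\pi(-x))$, and the Gram matrix is $B_4$-type.
\item \textbf{Regular cyclic summands.} For $m_\delta=x^2-\lambda$, or $(x^2-\lambda)(x^2-\mu)$, or $x^4-\lambda x^2+\mu$, Lemma~\ref{lemma_V1_decomposition} splits $V$ into regular cyclic summands $\mbF[\delta]v$. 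On such a summand, skewness gives $\varphi(\delta^iv,\delta^jv)=(-1)^i\varphi(v,\delta^{i+j}v)$. Hence the Gram matrix is determined by the values $\varphi(v,\delta^{2k}v)$, since the odd ones vanish. In dimension $2$ this gives $\mathrm{diag}(\gamma,-\gamma\lambda)$. In dimension $4$ it gives exactly the matrix $M_{\lambda,\mu}(\gamma,\nu)$ with $\gamma=\varphi(v,v)$ and $\nu=-\varphi(v,\delta^2v)$, using $\delta^4=\lambda\delta^2-\mu$ to reduce higher powers.
\item \textbf{The case $(x^2-\lambda)^2$.} The cyclic module is $4$-dimensional and the same computation applies with $m_\delta=x^4-2\lambda x^2+\lambda^2$. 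This again produces an $M$-type matrix, after checking regularity and reparametrizing.
\end{itemize}

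The main obstacle I anticipate is the $4$-dimensional cyclic cases. There I need to confirm that the Gram matrix obtained is exactly \eqref{eq_4_parametric_matrix}, with the parameters chosen so that the $(\delta,\varphi)$ pair matches the table. I also need to check the nondegeneracy condition on $(\gamma,\nu)$. For $(x^2-\lambda)^2$, I must verify that $V$ is indeed cyclic rather than a sum of two $2$-dimensional regular cyclic pieces. The latter would give $m_\delta=x^2-\lambda$, so cyclicity is automatic once $m_\delta$ has degree $4$.
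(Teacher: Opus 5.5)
\textbf{Genuine gap: the case $(x^2-\lambda)^2$.} Apart from this one case, your plan is the paper's argument. For $x^2-\lambda$, $(x^2-\lambda)(x^2-\mu)$ and $x^4-\lambda x^2+\mu$ you take a companion-matrix basis and impose skewness, and your computation does give $M_{\lambda,\mu}(\gamma,\nu)$ with $\nu=-\varphi(v,\delta^2v)$. For the isotropic pair you take a companion basis of $V_{\pi_1}$ completed by the dual basis in $V_{\pi_2}$, which reproduces the table's $M(\delta)$ with $B_4$.

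For $(x^2-\lambda)^2$, your cyclic computation yields the pair $\bigl(C((x^2-\lambda)^2),\,M_{2\lambda,\lambda^2}(\gamma,\nu)\bigr)$, which is regular iff $\nu+\lambda\gamma\neq0$. This is not the entry of Table~\ref{table_matrices}. The table asserts a one-parameter pair, with $M(\varphi)=\gamma B_4$ and the non-companion $M(\delta)$ displayed there. No choice of parameters makes your pair coincide with it: $B_4$ has zero diagonal, while your Gram matrix in the basis $v,\delta v,\delta^2v,\delta^3v$ has diagonal entries $\gamma$ and $\nu$, which cannot both vanish by regularity. The missing point is that $\gamma$ and $\nu$ are not independent invariants here, so the cyclic vector itself must be changed; "reparametrizing" cannot do this.

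\textbf{The fix stays inside your framework.} With $\pi(x)=x^2-\lambda$:
\begin{itemize}
\item The vector $\pi(\delta)v$ is isotropic, since $\varphi(\pi(\delta)v,\pi(\delta)v)=\varphi(v,\pi(\delta)^2v)=0$, while $\varphi(v,\pi(\delta)v)=-(\nu+\lambda\gamma)\neq0$.
\item Hence $v'=v+c\,\pi(\delta)v$ with $c=\gamma/(2(\nu+\lambda\gamma))$ is an isotropic cyclic vector, and $\varphi(v',\pi(\delta)v')=\varphi(v,\pi(\delta)v)$.
\item Put $e_1=v'$, $e_4=-\pi(\delta)v'/(2\lambda)$, $e_3=-\delta e_4$ and $e_2=(\delta v'-e_3)/\lambda$.
\end{itemize}
A direct check gives exactly the table's $M(\delta)$ and $M(\varphi)=\frac{\nu+\lambda\gamma}{2\lambda}B_4$. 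This is an elementary alternative that stays over $\mbF$. The paper instead extends scalars to $\mbF(\sqrt\lambda)$, uses $J_2(\sqrt\lambda)\oplus J_2(-\sqrt\lambda)$ with a two-parameter Gram matrix, descends to $\mbF$, and removes $\beta$ by a final basis change.

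\textbf{Two smaller slips.}
\begin{itemize}
\item Irreducibility of $x^4-\lambda x^2+\mu$ does not force $\mu\notin\mbF^2$. Your factorization argument only rules out $\mu=b^2$ with $\lambda+2b\in\mbF^2$, and $x^4+1$ is irreducible over $\mathbb{Q}$ although $\mu=1$ is a square.
\item If the complement of a $2$-dimensional $V_\pi$ carries the zero map, then $m_\delta=x(x^2-\lambda)$, not $x^2-\lambda$. That case, like the $(x\mp\nu)$ case, lies outside the lemma, whose list really concerns the non-split primary blocks.
\end{itemize}
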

        \begin{proof}
            For $x^4-\lambda x^2+\mu$ take a basis with coordinate matrix $M(\delta)=\ $ $C(\pi_{\lambda,\mu}(x))$, and just apply that $\delta$ is $\varphi$-skew. For $x^2-\lambda$ and $(x^2-\lambda)(x^2-\mu)$ the same process works. For $m_\delta(x)=\pi_1(x)\pi_2(x)$ with $\pi_1(x)=x^2+\lambda x+\mu$ and $\pi_2(x)=x^2-\lambda x+\mu$, set $V=V_{\pi_1}\oplus V_{\pi_2}$. Following \cite{benito2024skew}, these subspaces are totally isotropic. In $V_{\pi_1}$ choose a basis $\{v_1,v_2\}$ with coordinate matrix $C(\pi_1(x))$. Since $V$ is nondegenerate, this basis can be extended to a basis $\{v_1,v_2,v_3,v_4\}$ such that $v_3,v_4\in V_{\pi_2}$ and $M(\varphi)=B_4$. The fact $\delta$ is $\varphi$-skew allows us to determine the remaining entries of $M(\delta)$. And for $(x^2-\lambda)^2$ with $\lambda\not\in\mbF^2$, we extend scalars to $\mbK=\mbF(\sqrt{\lambda})\cong\mbF\oplus\mbF\sqrt{\lambda}$. In $V_\mbK\cong V\oplus \sqrt{\lambda}V$,  choose a basis $\{v_1+\sqrt{\lambda}v_2,v_3+\sqrt{\lambda}v_4,v_1-\sqrt{\lambda}v_2,v_3-\sqrt{\lambda}v_4,\}$,  $v_1,v_2,v_3,v_4\in V$ with $M_{\mbK}(\delta)=\left(\begin{smallmatrix}
            J_2(\sqrt{\lambda})&0\\0&J_2(-\sqrt{\lambda})
            \end{smallmatrix}\right)$ and $M_{\mbK}(\varphi)=\left(\begin{smallmatrix}
            0&0&\beta&\alpha\sqrt{\lambda}\\
            0&0&-\alpha\sqrt{\lambda}&0\\
            \beta&-\alpha\sqrt{\lambda}&0&0\\
            \alpha\sqrt{\lambda}&0&0&0
            \end{smallmatrix}\right)$,  $\alpha,\beta\in\mbF$. From the basis $\{v_1,v_2,v_3,v_4\}$, we get $M(\delta)=\left(\begin{smallmatrix}
            0&1&0&0\\
            \lambda&0&0&0\\
            1&0&0&1\\
            0&1&\lambda&0
            \end{smallmatrix}\right)$ and $M(\varphi)=\left(\begin{smallmatrix}
            \beta/2&0&0&\alpha/2\\
            0&-\beta/2\lambda&-\alpha/2&0\\
            0&-\alpha/2&0&0\\
            \alpha/2&0&0&0
            \end{smallmatrix}
            \right)$. Setting now  $v_1'=v_1+\beta/2\alpha v_4$, $v_2'=v_2+\ $ $\beta/2\alpha\lambda v_3$, $v_3'=v_3$ and $v_4'=-v_4$ we obtain the matrices in Table~\ref{table_matrices}.
        \end{proof}
        \begin{table}[h]
            \scriptsize
            \centering
            \begin{tabularx}{\textwidth}{>{\hsize=.2\hsize}XX>{\centering\arraybackslash}X>{\hsize=.37\hsize\centering\arraybackslash}X}
                \toprule
                $V$ &Elementary Divisors & $M(\delta)$ & $M(\varphi)$\\
                \midrule
                \multirow{3.5}{*}{$V_x$}&$x^3$ & $J_3(0)$ &$\gamma B_3(\pm)$ \\[1mm]
                &$x^2, x^2$ & \scalebox{.8}{$\begin{pmatrix}
                    J_2(0)&0\\
                    0&-J_2(0)
                \end{pmatrix}$} & $B_4$\\[1mm]
                &$x^5$ & $J_5(0)$ &$\gamma B_5(\pm)$ \\[1mm]
                \hdashline[1pt/1pt]
                &&&\\[-2mm]
                \multirow{6}{*}{$V_1$}&$(x^2-\lambda), \lambda\not\in\mbF^2$ & \scalebox{.8}{$\begin{pmatrix}
                    0&1\\\lambda&0
                \end{pmatrix}$ }&\scalebox{.8}{$\gamma\begin{pmatrix}
                    -\lambda&0\\0&1
                \end{pmatrix}$}\\[2mm]
                &$(x^2-\lambda)^2, \lambda\not\in\mbF^2$ & \scalebox{.8}{$\begin{pmatrix}
                    0&1&0&0\\\lambda&0&0&0\\1&0&0&-1\\0&-1&-\lambda&0
                \end{pmatrix}$} &$\gamma B_4$\\
                &$\displaystyle \pi_{\lambda,\mu}(x)=x^4-\lambda x^2+\mu$ & \multirow{2}{*}{$\displaystyle C(\pi_{\lambda,\mu}(x))$} &\multirow{2}{*}{$\displaystyle M_{\lambda, \mu}(\gamma, \nu)$}\\
                &$\mu,\lambda\pm2\sqrt{\mu},\lambda^2-4\mu\not\in\mbF^2$& &\\[1mm]
                \hdashline[1pt/1pt]
                &&\\[-2mm]
                \multirow{6}{*}{$V_2$}&$(x-\lambda), (x+\lambda), \lambda\in\mbF^*$ & \scalebox{.8}{$\begin{pmatrix}
                \lambda&0\\0&-\lambda
                \end{pmatrix}$} &$B_2$\\[2mm]
                &$(x-\lambda)^2, (x+\lambda)^2,\lambda\in\mbF^*$ & \scalebox{.8}{$\begin{pmatrix}
                    J_2(\lambda)&0\\
                    0&-J_2(\lambda)
                \end{pmatrix}$} & $B_4$\\[1mm]
                &&\multirow{4}{*}{\scalebox{.8}{$\begin{pmatrix}
                    0&-\mu&0&0\\1&-\lambda&0&0\\0&0&\lambda&\mu\\0&0&-1&0
                \end{pmatrix}$}}&\\
                &$\displaystyle(x^2+\lambda x+\mu), (x^2-\lambda x+\mu)$ &  &\multirow{2}{*}{$B_4$}\\
                &$\lambda\neq 0,  \lambda^2-4\mu\not\in\mbF^2$&&\\
                &&&\\\bottomrule
            \end{tabularx}
            \caption{Matrices corresponding to elementary divisors.}
            \label{table_matrices}
        \end{table}

        The final lemma of this section will be useful in the next.
        \begin{lemma}\label{lemma_der_inner}
            Let $(L,\varphi_\delta)=\mfd(V,\varphi, \delta)$ be a one-dimensional double extension of a $2$-dimensional abelian quadratic algebra $V$. Then $\der L=\mfso(L,\varphi_\delta)$ if $\delta=0$, whereas $\der_{\varphi_\delta}L=\inner L$ if $\delta\neq 0$.
        \end{lemma}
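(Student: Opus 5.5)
The plan is to treat the two cases separately. Throughout, I work in the basis adapted to $L=\mbF\cdot\delta\oplus V\oplus\mbF\cdot\delta^*$ from \eqref{eq_double_product} and \eqref{eq_double_product_form}.

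\emph{Case $\delta=0$.} Here the bracket \eqref{eq_double_product} vanishes identically, so $L$ is a $4$-dimensional abelian Lie algebra. Every linear map of $L$ is then a derivation. Consequently the $\varphi_\delta$-skew derivations are exactly $\mfso(L,\varphi_\delta)$, and this is how I will read the first assertion (for abelian $L$ one literally has $\der L=\mfgl(L)$). No further work is needed in this case.

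\emph{Case $\delta\neq0$.} First I show that $\delta$ is invertible. Since $\dim V=2$ and $\varphi$ is nondegenerate, $\mfso(V,\varphi)$ is one-dimensional. By Table~\ref{table_divisors_structure}, a nonzero $\delta$ has elementary divisors $(x-\lambda),(x+\lambda)$ or $(x^2-\lambda)$, so $\ker\delta=0$. From the formulas preceding (P7) we get
\[
Z(L)=\mbF\cdot\delta^*,\qquad L^2=V\oplus\mbF\cdot\delta^*.
\]
Since $\ad\colon L\to\inner L$ has kernel $Z(L)$, we get $\dim\inner L=3$. Because $\inner L\le\der_{\varphi_\delta}L$, it then suffices to prove $\dim\der_{\varphi_\delta}L\le 3$.

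To bound the dimension, take $D\in\der_{\varphi_\delta}L$. Any derivation preserves $L^2$ and $Z(L)$, so we can write
\[
D\delta^*=a\delta^*,\qquad Dv=w(v)+c(v)\delta^*\ (v\in V),\qquad D\delta=\alpha\delta+u+\beta\delta^*,
\]
with $w\in\End V$, $c\in V^*$ and $u\in V$. Skewness gives the following relations:
\begin{itemize}
\item $\varphi_\delta(D\delta,\delta)=0$ forces $\beta=0$;
\item $\varphi_\delta(D\delta,\delta^*)=-\varphi_\delta(\delta,D\delta^*)$ gives $\alpha=-a$;
\item restricting skewness to $V$ gives $w\in\mfso(V,\varphi)$;
\item $\varphi_\delta(Dv,\delta)=-\varphi_\delta(v,D\delta)$ gives $c(v)=-\varphi(v,u)$.
\end{itemize}
Next I apply the derivation rule to $[\delta,v]=\delta(v)$ and take $V$-components, which yields $[w,\delta]=\alpha\delta$. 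Since $\mfso(V,\varphi)$ is one-dimensional, hence abelian, this forces $\alpha=0$, so $a=0$, and then $w=t\delta$ for some $t\in\mbF$. Thus $D$ is determined by the pair $(u,t)\in V\times\mbF$, and $\dim\der_{\varphi_\delta}L\le 3$.

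I will still check that the remaining derivation identities, on $[v,v']=\varphi(\delta v,v')\delta^*$ and those involving $\delta^*$, impose no contradiction; they do hold because $w$ commutes with $\delta$ and is skew. This check is not needed for the upper bound, but it confirms the count is consistent. For an explicit match with inner derivations: $\ad\,\delta$ realizes the parameters $(u,t)=(0,1)$, while $\ad\,v$ realizes $u=-\delta(v)$, and these $u$ exhaust $V$ because $\delta$ is invertible.

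The main obstacle is the bookkeeping of the skewness conditions with respect to the hyperbolic pair $\delta,\delta^*$. The decisive point is extracting $\alpha=0$ from $[w,\delta]=\alpha\delta$ inside the abelian algebra $\mfso(V,\varphi)$, and this is exactly where $\dim V=2$ is used.
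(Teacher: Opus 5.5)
Your proof is correct and is essentially the paper's argument, including your reading of the $\delta=0$ case as $\der_{\varphi_\delta}L=\mfso(L,\varphi_\delta)$: you use the same form of $D$ forced by the invariance of $Z(L)=\mbF\cdot\delta^*$ and $L^2=V\oplus\mbF\cdot\delta^*$, the same skewness relations, and the same count $\dim\der_{\varphi_\delta}L\le 3=\dim\inner L$. The only minor difference is how the diagonal parameter is eliminated: you take the $V$-component of $D[\delta,v]$ and use that the one-dimensional $\mfso(V,\varphi)$ is abelian, whereas the paper applies $D$ to a bracket $[v,w]=\delta^*$ and uses $g=\lambda\delta$ together with the skewness of $\delta$ to get $D\delta^*=0$.
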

        \begin{proof}
            Assume $\delta\neq 0$, otherwise, $L$ is abelian and the result is immediate. We point out that $\dim\mfso(V,\varphi)=1$, thus $\mfso(V,\varphi)=\mbF\cdot\delta$. The possible structures for the elementary divisors of $\delta$ appear in Table~\ref{table_divisors_structure}, thus $\delta$ is a $\varphi$-skew isomorphism. Therefore, $Z(L)=\mbF\cdot \delta^*$ and $L^2=V\oplus\mbF\cdot \delta^*$. Since both ideals are invariant by derivations, any $d\in \der_{\varphi_\delta} L$ is given explicitly as ($v$ is an arbitrary element of $V$)
            \[
                d(\delta)=a\delta+v_0+b\delta^*,\  d(v)=g(v)+h(v)\delta^* \text{\ and \ } d(\delta^*)=c\delta^*,
            \]
            for some $a,b,c\in\mbF$, $v_0\in V$, $g\in\End\,(V)$ and $h\in V^*$.
            And using that $\varphi_\delta(d(v), w)=\varphi_\delta(v, d(w))$, we get $b=0,\, a=-c$, $h(v)=-\varphi(x,v)$ and $\varphi(g(v),w)=-\varphi(v,g(w))$, thus $g\in \mfso(V,\varphi)$. Hence  $g=\lambda \delta$ for some $\lambda \in \mbF$. Since $\varphi$ is non-degenerate, there are $v,w\in V$ such that $[v,w]=\varphi(\delta(v),w)\delta^*=\delta^*$. Applying now that $d$ is a derivation,
            \begin{multline*}
                c\delta^*=d([v,w])=[d(v),w]+[v,d(w)]=[g(v),w]+[v,g(w)]\\=\varphi(\delta(g(v)),w)\delta^*+\varphi(\delta(v),g(w))\delta^*\\=\lambda(\varphi(\delta^2(v),w)-\varphi(\delta^2(v),w))\delta^*=0.
            \end{multline*}
            So $c=0=a$ and $\dim\der_\varphi(L)\leq 3$.  But $\dim\inner(L)=\dim L/Z(L)=3$ and $\inner(L)\subseteq \der_\varphi(L)$, therefore both vector spaces are equals.
        \end{proof}
    
    \section{Construction of Algebras}\label{section_constructions}
        Throughout this section, we construct indecomposable quadratic Lie algebras $\mfd(L,\varphi,\delta)$ via the double extension process. Hence $\delta\in\der_\varphi(L)$ and $\delta$ outer by Proposition \ref{prop_iso_iso_reduction_one}. The Lie bracket is determined by \eqref{eq_double_product}, and the invariant form is obtained from \eqref{eq_double_product_form}. More precisely, if $M(\varphi)$ denotes the matrix of $\varphi$ (see Table~\ref{table_matrices}), then
        \begin{equation}
            \small   \widehat{M(\varphi)}:=M(\varphi_\delta)=\left( \begin{array}{c|c|c} 0 & \bm{0} & 1 \\ \hline \bm{0} & M(\varphi) & \bm{0} \\ \hline 1 & \bm{0} & 0 \\\end{array} \right).
        \end{equation} 
        We also use the matrix forms $B_{2n}$, $B_{2n+1}(\pm)$ and $M(\lambda,\mu)(\gamma, \nu)$ introduced earlier (see \eqref{eq_isot_zer_vap_Caalim} and \eqref{eq_4_parametric_matrix}). We point out that all algebras obtained by iterated double extension in this section have totally isotropic center and are therefore reduced by \ref{item_center_derived}. It then follows from Lemma~\ref{lemma_indecomposable} that all algebras of dimension at most $7$ described here are indecomposable.
    
        \subsection{Algebras up to dimension 5} 
            The unique indecomposable quadratic abelian Lie algebra is one-dimensional. 
            \begin{itemize}[leftmargin=*]
                \item $\bm{(\mfr_1,\varphi_1^\gamma):}$ One-parameter family given by $\gamma \in \mbF^*$; the algebra has a  basis $\{x\}$ with $[x,x]=0$ and $\varphi_1^\gamma(x,x)=\gamma$.
            \end{itemize}
            
            By Lemma~\ref{lemma_dimension_bounds}, every non-abelian solvable reduced quadratic Lie algebra has dimension at least~$4$.  Thus, we begin the construction of non-abelian algebras with a $2$-dimensional orthogonal vector space $(V,\varphi)$ and a nonzero element $\delta\in\mfso(V,\varphi)$.
    
            Using Table \ref{table_divisors_structure} and rescaling $\delta$ if necessary, we may assume that the elementary divisors of $\delta$ are either $(x-1)$, $(x+1)$ or $(x^2-\lambda)$, $\lambda\not\in\mbF^2$. Hence,  the one-dimensional quadratic algebra and the above elementary divisor structures give rise to the following $4$-dimensional quadratic Lie algebras:
 
            \begin{itemize}[leftmargin=*]
                \item $\bm{(\mfr_{4,1},\varphi_{4,1}):}$ The algebra has a basis $\{a,x,y,z\}$ with nonzero products $[a,x]=x,\, [a,y]=-y,\, [x,y]=z$ and $B_4$ as matrix of $\varphi_{4,1}$.
                \item $\bm{(\mfr_{4,2}^\lambda,\varphi_{4,2}^{\gamma,\lambda}):}$ Two-parameter family $\lambda \notin \mbF^2$ and $\gamma \in \mbF^*$; the algebra has a basis $\{a,x,y,z\}$ with nonzero products
                $[a,x]=\lambda y,\, [a,y]=x,\, [x,y]=\lambda z$ and \scalebox{0.7}{$\gamma\widehat{\begin{pmatrix}  -\lambda&0\\0&1
                \end{pmatrix}}$} as matrix of $\varphi_{4,2}^{\gamma,\lambda}$.
            \end{itemize} 

            Let now $w\in \mfr_{4,1}$. Since $\mfr_{4,1}^2/Z(\mfr_{4,1})=\mbK\cdot\bar{x}\oplus\mbK\cdot\bar{y}$ is a $2$-dimensional abelian algebra, the induced map $\overline{\ad\,w}=\overline{\ad\,(ka)}$ has minimal polynomial $(x-k)(x+k)$, $k\neq 0$. Similarly, if $w\in \mfr_{4,2}^\lambda$, the induced map $\overline{\ad\,w}$ has minimal polynomial $(x^2-\lambda k^2)$ where $\lambda k^2\not\in \mbF^2$. Therefore, $\mfr_{4,1}$ and $\mfr_{4,2}^\lambda$ are not isomorphic, by a direct application of Proposition~\ref{prop_necessary_isomorphism}.

            The next step is $\dim V=3$. In this case $x^3$ is the unique elementary divisor according to Table \ref{table_divisors_structure}. This yields to the double extension:
            \begin{itemize}[leftmargin=*]
                \item $\bm{(\mfr_5,\varphi_5^\gamma):}$ One-parameter family given by $\gamma\in \mbF^*$; the algebra has a basis $\{x_1,x_2,y,z_1,z_2\}$ with nonzero products $[x_1,x_2]=y,\, [x_1,y]=z_1,\, [x_2,y]=z_2$ and $\gamma B_5(\pm)$ as matrix of $\varphi_5^\gamma$.
            \end{itemize}
    
            \noindent The only quadratic nilpotent and non-abelian algebra is $\mfr_5$. It is the well known $3$-step free nilpotent Lie algebra $\mfn_{2,3}$ of type $2$ (see \cite{del2012free}).
        \subsection{Algebras of dimension 6}
            These algebras arise as double extensions of a $4$-dimensional solvable quadratic Lie algebra $(L,\varphi)$. From Lemma~\ref{lemma_der_inner}, $\der_\varphi(L)=\inner (L)$ if $L=\mfr_{4,1}, \mfr_{4,2}^\lambda$. Hence, all their double extensions are decomposable. Therefore, indecomposable $6$-dimensional quadratic algebras arise only from $4$-dimensional orthogonal vector spaces $(V,\varphi)$.

            By Table~\ref{table_divisors_structure}, we obtain eight families. Some parameters may be normalized to $1$ by rescaling $\delta$. Using the information provided in Table~\ref{table_matrices}, we can explicitly describe both the Lie bracket and the invariant bilinear form taking into account that $V=V_x\perp V_1\perp V_2$ and $4=\dim V_x+\dim V_1+\dim V_2$. The following list exhausts all possibilities.

            \begin{itemize}[leftmargin=*]
                \item $\bm{(\mfr_{6,1},\varphi_{6,1}):}$ Elementary divisors $x^2,x^2$. The algebra has a basis $\{a,x_1,x_2,$ $y_1,y_2,z\}$ with nonzero products $[a,x_1]=x_2,\, [a,y_1]=-y_2,\, [x_1,y_1]=z,$ and $B_6$ as matrix of $\varphi_{6,1}$. 
                \item $\bm{(\mfr_{6,2}^\lambda,\varphi_{6,2}):}$ Elementary divisors $(x-1), (x+1)$ and $(x-\lambda),(x+\lambda)$. one-Parameter family with $\lambda\in \mbF^*$. The algebra has a basis $\{a,x_1,x_2,y_1,y_2,z\}$ with nonzero products $[a,x_1]=x_1, [a,x_2]=-x_2,\, [a,y_1]=\lambda y_1,$, $[a,y_2]=-\lambda y_2,\, [x_1,x_2]=z, [y_1,y_2]=\lambda z$ and \scalebox{0.7}{$\widehat{\begin{pmatrix}B_2&\bm{0}\\\bm{0}&B_2\end{pmatrix}}$} as matrix of $\varphi_{6,2}$.  
                \item $\bm{(\mfr_{6,3},\varphi_{6,3}):}$ Elementary divisors $(x-1)^2, (x+1)^2$. The algebra has a basis $\{a,x_1,x_2,$ $y_1,y_2,z\}$ with nonzero products $[a,x_1]=x_1+x_2, [a,x_2]=x_2,[a,y_1]=-y_1-y_2,$ $[a, y_2]=-y_2,[x_1,y_1]=[x_1,y_2]=[x_2,y_1]=z$ and $B_6$ as matrix of $\varphi_{6,3}$. 
                \item $\bm{(\mfr_{6,4}^\lambda,\varphi_{6,4}^{\gamma,\lambda}):}$ Elementary divisors $(x-1), (x+1)$, $(x^2-\lambda)$. Two-parameter family, $\lambda\not\in\mbF^2$ and $\gamma\in \mbF^*$. The algebra has a basis $\{a,x_1,x_2,$ $y_1,y_2,z\}$ with nonzero products $[a,x_1]=x_1, [a,x_2]=-x_2, [a,y_1]=\lambda y_2$, $[a,y_2]=y_1, [x_1,x_2]=z, [y_1,y_2]=\lambda z$ and \scalebox{0.7}{$\gamma\widehat{\begin{pmatrix}B_2&\bm{0}\\\bm{0}&A_\lambda\end{pmatrix}}$}, $A_\lambda=$\scalebox{0.7}{$\begin{pmatrix}  -\lambda&0\\0&1
                \end{pmatrix}$}, as matrix of $\varphi_{6,4}^{\gamma,\lambda}$.
                \item $\bm{(\mfr_{6,5}^{\lambda, \mu,\gamma},\varphi_{6,5}^{\lambda, \mu,\gamma,\nu}):}$ Elementary divisors $(x^2-\lambda), (x^2-\mu)$ $\lambda, \mu\not\in\mbF^2$. Four-parameter family, $\lambda, \mu\not\in\mbF^2$ and $\gamma, \nu\in \mbF^*$. It has a basis $\{a,x_1,x_2, y_1,y_2,$ $z\}$ with nonzero products $[a,x_1]=\lambda x_2, [a,x_2]=x_1, [a,y_1]=\mu y_2$, $[a,y_2]=y_1, [x_1,x_2]=\lambda z, [y_1,y_2]=\gamma\mu z$ and \scalebox{0.7}{$\nu\widehat{\begin{pmatrix}A_\lambda&\bm{0}\\\bm{0}&\gamma A_\mu\end{pmatrix}}$} as matrix of $\varphi_{6,5}^{\lambda, \mu,\gamma,\nu}$.
                \item $\bm{(\mfr_{6,6}^\lambda,\varphi_{6,6}^\gamma):}$ Elementary divisor $(x^2-\lambda)^2$. Two-parameter family, $\lambda\not\in\mbF^2$ and $\gamma\in \mbF^*$. The algebra has a basis $\{a,x_1,y_1,x_2,y_2,z\}$ with nonzero products $[a,x_1]=\lambda y_1+x_2, [a,y_1]=x_1-y_2, [a,x_2]=-\lambda y_2$, $[a,y_2]=-x_2$, $[x_1,y_1]= [y_1,y_2]=z, [x_1,x_2]=\lambda z$ and $\gamma B_6$ as matrix of $\varphi_{6,6}^\gamma$.
                \item $\bm{(\mfr_{6,7}^\lambda,\varphi_{6,7}):}$ Elementary divisors $(x^2+2 x+\lambda), (x^2-2 x+\lambda)$. One-parameter family, $1-\lambda\not\in\mbF^2$. It has a basis $\{a,x_1,y_1,$ $x_2,y_2,z\}$ with nonzero products $[a,x_1]=y_1$, $[a,y_1]=-\lambda x_1+2 y_1$, $[a,x_2]=-2 x_2-y_2$, $[a,y_2]=\lambda x_2$, $[x_1,x_2]=z$, $[y_1,x_2]=2 z$, $[y_1,y_2]=-\lambda z$ and $B_6$ as matrix of $\varphi_{6,7}$.
                \item $\bm{(\mfr_{6,8}^{\lambda,\mu,\gamma},\varphi_{6,8}^{\lambda,\mu,\gamma, \nu}):}$ Elementary divisor $x^4-\lambda x^2+\mu$. Four-parameter family, $\mu,\lambda\pm 2 \sqrt{\mu}, \lambda^2-4\mu\notin\mbF^2$. The algebra has a basis $\{a,x_1,x_2,x_3,x_4,z\}$ with nonzero products $[a,x_i]=x_{i+1},i=1,2,3$, $[a,x_4]=-\mu x_1+\lambda x_3$, $[x_1,x_2]=z$, $[x_1,x_4]=-[x_2,x_3]=(\gamma\mu+\lambda)z$, $[x_3,x_4]=(\gamma\lambda\mu+\lambda^2-\mu)z$ and $\nu\widehat{M_{\lambda,\mu}(\gamma,1)}$ as matrix of $\varphi_{6,8}^{\lambda,\mu,\gamma, \nu}$.
            \end{itemize}  
            For each $\mfr_{6,j}$, every $w\in \mfr_{6,j}\backslash (\mfr_{6,j})^2$ induces a derivation $\overline{\ad\,w}$ of $\mfr_{6,j}^2/Z(\mfr_{6,j})$ whose elementary divisors are the same as those defining the algebra. Hence, Proposition~\ref{prop_necessary_isomorphism} implies that these families are pairwise non-isomorphic.

            The algebra $\mfr_{6,1}$ is the only quadratic $6$-dimensional nilpotent algebra.  It is the $2$-step free nilpotent algebra $\mfn_{3,2}$ of type $3$. According to \cite{del2012free}, the Lie algebras $\mfn_{2,3}$ and $\mfn_{3,2}$ are the unique free nilpotent quadratic Lie algebras.

        \subsection{Algebras of dimension 7}
            The algebras in this section arise as double extensions of one of the following $5$-dimensional quadratic algebras: abelian, $\mfr_5$, $\mfr_{4,1}\oplus\mfr_1$ and $\mfr_{4,2}^\lambda\oplus\mfr_1$ (here $\mfr_1:=\mbF c$). We start with the double extensions of a $5$-dimensional orthogonal space $(V,\varphi)$. Once again, Tables~\ref{table_divisors_structure} and~\ref{table_matrices} are used to identify the (indecomposable) extensions.

            \begin{itemize}[leftmargin=*]
                \item $\bm{(\mfr_{7,1},\varphi_{7,1}^\gamma):}$ Elementary divisor $x^5$. One-parameter family, $\gamma \in \mbF^*$. The algebra has a basis $\{a,x_1,x_2,x_3,x_4,x_5,z\}$ with nonzero products $[a,x_i]=x_{i+1}, i=1,2,3,4$,  $[x_1,x_4]=-[x_2,x_3]=z$ and $\gamma B_7(\pm)$ as matrix of $\varphi_{7,1}$.
                \item $\bm{(\mfr_{7,2},\varphi_{7,2}^\gamma):}$ Elementary divisors $(x-1), (x+1)$ and $x^3$. One-Parameter family, $\gamma \in \mbF^*$. The algebra has a basis $\{a,x_1,x_2,y_1,y_2,y_3,z\}$ with nonzero products $[a,x_1]=x_1, [a,x_2]=-x_2$, $[a,y_i]=y_{i+1},i=1,2$, $[x_1,x_2]=z, [y_1,y_2]=z$ and \scalebox{0.7}{$\gamma\widehat{\begin{pmatrix}B_2&\bm{0}\\\bm{0}&-B_3(\pm)\end{pmatrix}}$} as matrix of $\varphi_{7,2}^\gamma$. 
                \item $\bm{(\mfr_{7,3}^{\lambda,\gamma},\varphi_{7,3}^{\lambda,\gamma,\nu}):}$ Elementary divisors $(x^2-\lambda)$ and $x^3$. Three-parameter family $\lambda\not\in\mbF^2, \gamma,\nu\in \mbF^*$ The algebra has a basis $\{a,x_1,x_2,y_1,y_2,y_3,z\}$ with nonzero products $[a,x_1]=\lambda x_2$, $[a,x_2]=x_1$, $[a,y_i]=y_{i+1}, i=1,2$, $[x_1,x_2]=\lambda\gamma z$, $[y_1,y_2]=z$ and \scalebox{0.7}{$\nu\widehat{\begin{pmatrix}\gamma A_\lambda&\bm{0}\\\bm{0}&-B_3(\pm)\end{pmatrix}}$} as matrix of $\varphi_{7,3}^{\lambda,\gamma, \nu}$.
            \end{itemize}
    
            As $\mfr_5=\mfn_{2,3}$, every skew-derivation is completely determined by its action on the generators ($a$ and $x_1$ in the chosen basis of $\mfr_5$). Exploiting this fact, a straightforward computation yields the following matrix forms (see \cite[Section~4]{del2012free} for a description of $\der_{\varphi_5}\mfr_5$ and also for the isometric automorphisms of this algebra):
            \begin{equation}
                \delta_{a,b,c,d,e,f}\equiv\scalebox{.7}{$\begin{pmatrix}
                    a&b&0&0&0\\
                    c&-a&0&0&0\\
                    d&e&0&0&0\\
                    f&0&e&a&b\\
                    0&f&-d&c&-a
                \end{pmatrix}$}.
            \end{equation}    

            \noindent We also note that $\inner(\mfr_5)$ are just the derivations $\delta_{0,0,0,d,e,f}$. Hence, up to isometric isomorphism we may assume $d=e=f=0$ by Proposition~\ref{prop_iso_iso_reduction_one}. Moreover, there exist regular matrices $P$ with $\det P=\pm1$ and such that 
            \[
                P^{-1}\scalebox{.7}{$\begin{pmatrix}
                    a&b\\ c&-a
                \end{pmatrix}$}P=J\in\left\{\scalebox{.7}{$\begin{pmatrix}
                    0&0\\1&0
                \end{pmatrix}$};\scalebox{.7}{$\begin{pmatrix}
                    \lambda&0\\0&-\lambda
                \end{pmatrix}$}, \lambda\in\mbF^*;\scalebox{.7}{$\begin{pmatrix}
                    0&1\\\lambda&0
                \end{pmatrix}$}, \lambda\not\in\mbF^2 \right\}.
            \]
            The matrices $P$ let us define  isometric automorphisms $F$ of $(\mfr_{5},\varphi_5^\gamma)$ such that
            \[
                F\equiv\scalebox{.7}{$\begin{pmatrix}
                    P&0&0\\
                    0&\det(P)&0\\
                    0&0&\det(P)P
                \end{pmatrix}$}\quad F^{-1}\circ\delta_{a,b,c,0,0,0}\circ F\equiv\scalebox{.7}{$\begin{pmatrix}
                    J&0&0\\
                    0&0&0\\
                    0&0&J
                \end{pmatrix}$}.
            \]
            In the second case for $J$, rescaling the derivation allows us to assume that $\lambda=1$. Therefore, the quadratic algebras as extension of $(\mfr_{5},\varphi_5^\gamma)$ are:
    
            \begin{itemize}[leftmargin=*]
                \item $\bm{(\mfr_{7,4},\varphi_{7,4}^\gamma):}$ From $\delta_{0,0,1,0,0,0}$. One-parameter family, $\gamma \in \mbF^*$. The algebra has a basis $\{a,x_1,x_2,y,w_1,w_2,z\}$ with nonzero products $[a,x_1]=x_2$, $[a,w_1]=w_2$, $ [x_1,x_2]=y$, $[x_1,y]=w_1,[x_1,w_1]=z,[x_2,y]=w_2$ and $\gamma (-B_7(\pm))$ as matrix of $\varphi_{7,4}^\gamma$.
                \item $\bm{(\mfr_{7,5},\varphi_{7,5}^\gamma):}$ From $\delta_{1,0,0,0,0,0}$. One-parameter family, $\gamma \in \mbF^*$. It has a basis $\{a,x_1,x_2,y,w_1,w_2,z\}$ with nonzero products $[a,x_1]=x_1, [a,x_2]=-x_2,[a,w_1]=w_1$, $[a,w_2]=-w_2, [x_1,x_2]=y, [x_1,y]=w_1$, $[x_2,y]=w_2,[x_1,w_2]=-z, [x_2,w_1]=-z$ and $\gamma (-B_7(\pm))$ as matrix of $\varphi_{7,5}^\gamma$.
                \item $\bm{(\mfr_{7,6}^\lambda,\varphi_{7,6}^\gamma):}$ From $\delta_{0,1,\lambda,0,0,0}$. Two-parameter family, $\lambda \notin \mbF^2$, $\gamma \in \mbF^*$. It has a basis $\{a,x_1,x_2,y,w_1,w_2,z\}$ and nonzero products $[a,x_1]=\lambda x_2,[a,x_2]=x_1, [a,w_1]=\lambda w_2$, $[a,w_2]=w_1, [x_1,x_2]=y,[x_1,y]=w_1$, $[x_2,y]=w_2$, $[x_1,w_1]=\lambda z,[x_2,w_2]=-z$ and $\gamma (-B_7(\pm))$ as matrix of $\varphi_{7,6}^\gamma$.
            \end{itemize}
            It remains to consider extensions from $\mfr=\mfr_{4,1}\oplus\mbF\cdot c$ and $\mfr=\mfr_{4,2}^\lambda\oplus\mbF\cdot c$. In both cases, $\mfr=I\oplus J$, $I$ and $J$ regular and orthogonal ideals, and the algebra of skew-derivations splits into the following direct sum:
            \begin{equation}
                \der_\varphi(\mfr)=\der_{\varphi\vert_I}(I)\oplus\der_{\varphi\vert_J}(J)\oplus\{\delta\mid \delta(I)\subseteq J,\, \delta(J)\subseteq I\}.
            \end{equation}

            \noindent By Lemma \ref{lemma_der_inner},  $\der_{\varphi\vert_I}(I)=\inner(I,\varphi\vert_I)$ if $I=\mfr_{4,1}, \mfr_{4,2}^\lambda$ and $\der_{\varphi\vert_{\mbF\cdot c}}(\mbF\cdot c)=0$. Let $\{a,x,y,z\}$ the basis of $I$. By indecomposibility, we focus on those double extensions through $\delta\in \der_\varphi (\mfr)$ that interchange the summands. Since centers and derived algebras are $\delta$-invariant, $\delta(\mfr^2)\subseteq (\mbF\cdot c)^2=0$ and we assume $\delta(a)=\alpha c$, $\delta(c)=\beta z$. Denote $\nu=B(c,c)$. Then for $\mfr_{4,1}$ we have
            \[
                \beta=\varphi_{4,1}(a,\delta(c))=-\varphi_{4,1}(\delta(a),c)=-\alpha\nu.
            \]
            Rescaling $\delta$ we may assume $\alpha=1$, yielding the following algebra:
            \begin{itemize}[leftmargin=*]
                \item $\bm{(\mfr_{7,7},\varphi_{7,7}^\gamma):}$ One-parameter family, $\gamma \in \mbF^*$. The algebra has a basis $\{b,a,x,y,$ $z,c,w\}$ with nonzero products $[b,a]=c,[b,c]=-z, [a,x]=x$, $[a,y]=-y, [x,y]=z, [a,c]=w$ and \scalebox{0.7}{$\gamma\widehat{\begin{pmatrix}B_4&\bm{0}\\\bm{0}&1\end{pmatrix}}$} as matrix of $\varphi_{7,7}^\gamma$.
            \end{itemize}
            And for $(\mfr_{4,2}^\lambda,\varphi_{4,2}^{\lambda,\gamma})$ we have $\gamma\beta=\varphi_{4,2}^{\lambda,\gamma}(a,\delta(c))=-\varphi_{4,2}^{\lambda,\gamma}(\delta(a),c)=-\alpha\nu.$
            Rescaling again we may assume $\alpha=1$, arising to the algebra:
            \begin{itemize}[leftmargin=*]
                \item $\bm{(\mfr_{7,8}^{\lambda,\gamma},\varphi_{7,8}^{\lambda,\gamma,\nu}):}$ Three-parameter family, $\lambda,\gamma,\nu \in \mbF^*$. The algebra has a basis $\{b,a,x,y,z,c,w\}$ with non-zero products $[b,a]=c,\, [b,c]=-z,\, [a,x]=\lambda y$, $[a,y]=x,\, [x,y]=\lambda\gamma z,\, [a,c]=w$ and \scalebox{0.7}{$\displaystyle\nu\widehat{\begin{pmatrix}\widehat{\gamma A_\lambda}&\bm{0}\\\bm{0}&1\end{pmatrix}}$} as matrix of $\varphi_{7,8}^{\lambda,\gamma,\nu}$.
            \end{itemize}

            \noindent In dimension $7$, the following isometric isomorphisms are easily checked:
            \[
                (\mfr_{7,1},\varphi_{7,1})\cong (\mfr_{7,4},\varphi_{7,4}^\gamma), (\mfr_{7,2},\varphi_{7,2}^\gamma)\cong (\mfr_{7,7},\varphi_{7,7}^\gamma), (\mfr_{7,3}^{\lambda,\gamma},\varphi_{7,3}^{\lambda,\gamma,\nu})\cong (\mfr_{7,8}^{\lambda,\gamma},\varphi_{7,8}^{\lambda,\gamma,\nu})
            \]
            For the remaining families, non-isomorphism follows from Proposition~\ref{prop_necessary_isomorphism}. The only nilpotent algebra in the list is $\mfr_{7,1}$. The algebras $\mfr=\mfr_{7,2},\mfr_{7,3}^{\lambda,\gamma}$ have $2$-dimensional center, but for a given $w\in \mfr\setminus \mfr^2$, the elementary divisors of the map $\overline{\ad w}$ on the (abelian) algebra $\mfr^2/Z(\mfr)$ are $x$, $(x-k)$, $(x+k)$ if $\mfr=\mfr_{7,2}$ and $x, (x^2-\lambda k^2)$ if $\mfr=\mfr_{7,3}^{\lambda,\gamma}$, so they are not isomorphic. In the case of $\mfr=\mfr_{7,5},\mfr_{7,6}^\lambda$, both algebras have a one-dimensional center and satisfy $\mfr^2/Z(\mfr)\cong \mfr_5$. 
            Let $f=k\delta+\ad_{\mfr_5}\, x$ be where $\delta$ is the outer derivation defining the double extension $\mfr=(\mfr_5)_\delta$ namely $\mfr_{7,5}$ or $\mfr_{7,6}^\lambda$. It is readily checked that the elementary divisors of $f\vert_{Z(\mfr_5)}$ are $(x-k),(x+k)$ when $\mfr=\mfr_{7,5}$, whereas it is $(x^2-\lambda k^2)$ if $\mfr=\mfr_{7,6}$. Hence these algebras are not isomorphic.

            The work along this section is summarized in the following result:

            \begin{theorem}
                Up to dimension $7$, the indecomposable and solvable quadratic Lie algebras over fields of characteristic not two are those included in Table~\ref{tab_indecomposable_up_to_seven}. 
            \end{theorem}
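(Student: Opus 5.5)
The argument is an induction on $\dim L\le 7$ driven by Proposition~\ref{prop_double_solvable}. Every nonabelian indecomposable solvable quadratic $(L,\varphi)$ has the form $\mfd(L',\varphi',\delta)$ with $\dim L'=\dim L-2$ and $L'$ solvable quadratic, though not necessarily indecomposable. By Proposition~\ref{prop_iso_iso_reduction_one} and the remark following it, only outer $\delta$ need be considered, taken modulo rescaling, inner derivations and isometric automorphisms of $L'$. Conversely, every algebra in the table is a double extension with totally isotropic center, so it is reduced by \ref{item_center_derived} and hence indecomposable by Lemma~\ref{lemma_indecomposable}. Lemma~\ref{lemma_indecomposable} also shows that the only dimensions to treat are $1,4,5,6,7$, so it remains to show that every indecomposable algebra appears.

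\textbf{Enumerating the bases $L'$.} I would list, for each dimension, the possible $L'$ as orthogonal sums of lower-dimensional indecomposables. In dimensions $2$ and $3$, Lemma~\ref{lemma_dimension_bounds} forces $L'$ to be abelian. In dimension $4$, $L'$ is either abelian or one of $\mfr_{4,1},\mfr_{4,2}^\lambda$; for the latter two, Lemma~\ref{lemma_der_inner} makes every skew derivation inner, so their extensions are decomposable. In dimension $5$, $L'$ is abelian, $\mfr_5$, or $\mfr_{4,i}\oplus\mfr_1$. For abelian $L'=V$, reducedness is (P7): $\ker\delta\subseteq\im\delta\neq0$. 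Table~\ref{table_divisors_structure}, obtained from \ref{item_V_decomposition}--\ref{item_isotropic_orthogonal} and Lemma~\ref{lemma_V1_decomposition}, lists the admissible elementary divisors, and Table~\ref{table_matrices} together with Lemma~\ref{lemma_table2_proof} gives normal forms for the pair $(\delta,\varphi)$. Writing out the brackets via \eqref{eq_double_product} produces exactly the listed families. For $\mfr_5$, I would use the explicit form of $\der_\varphi(\mfr_5)$ and the automorphisms $F$ to reduce to the three normal forms $J$. For $\mfr_{4,i}\oplus\mbF c$, the splitting of $\der_\varphi$ reduces the problem to the mixing derivations; one must also check that any non-mixing part is inner, since otherwise an ideal would split off. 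Finally, \ref{item_no_regular_ideals} and \ref{item_indecomposable_reduced} exclude the degenerate choices, such as a $\delta$ with a non-isotropic kernel vector outside $\im\delta$.

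\textbf{Independence of the list.} I would check that distinct entries are not isometrically isomorphic. The isometric isomorphisms identifying $\mfr_{7,4},\mfr_{7,7},\mfr_{7,8}$ with $\mfr_{7,1},\mfr_{7,2},\mfr_{7,3}$ are written down explicitly. For the remaining pairs, Proposition~\ref{prop_necessary_isomorphism} applies: compare the elementary divisors of $\overline{\ad w}$ on $L^2/Z(L)$, or on $Z$ of that quotient.

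\textbf{Main obstacle.} The hard step is completeness of the reduction in the decomposable-base cases, especially $\mfr_{4,i}\oplus\mfr_1$. One must show that an arbitrary skew derivation, after subtracting inner parts and conjugating by isometries, becomes a mixing map with $\delta(a)=\alpha c$ and $\delta(c)=\beta z$, with no further parameters surviving. Here I would use that derivations preserve the center and the derived algebra, and the $\varphi$-skewness relation computed in the text. A second, more routine, point is to confirm that different isotropic central elements $z$ give nothing new: any choice produces some double-extension presentation, and the enumeration already covers all of them.
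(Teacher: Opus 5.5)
Your proposal is correct and follows the paper's own route. The steps match: induction via Proposition~\ref{prop_double_solvable}; reduction to outer derivations modulo rescaling, inner derivations and isometries; a case analysis over the bases (abelian via Tables~\ref{table_divisors_structure} and~\ref{table_matrices}, $\mfr_5$ via $\delta_{a,b,c,d,e,f}$ and the automorphisms $F$, and $\mfr_{4,i}\oplus\mfr_1$ via mixing derivations); then the identifications $\mfr_{7,4}\cong\mfr_{7,1}$, $\mfr_{7,7}\cong\mfr_{7,2}$, $\mfr_{7,8}\cong\mfr_{7,3}$, with Proposition~\ref{prop_necessary_isomorphism} handling the remaining families. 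One small correction: the non-mixing part of a skew derivation of $\mfr_{4,i}\oplus\mbF c$ can be discarded not because an ideal would otherwise split off, but because it is inner by Lemma~\ref{lemma_der_inner} (and $\der_\varphi(\mbF c)=0$), so Proposition~\ref{prop_iso_iso_reduction_one} absorbs it.
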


            \begin{table}[ht]
                \addtolength{\tabcolsep}{-0.3em}
                \scriptsize
                \centering
                \begin{tabularx}{\textwidth}{>{\hsize=.2\hsize}X>{\hsize=.6\hsize}X>{\hsize=.7\hsize}XX}\toprule
                    $\bm{\mfr}$ & Brackets & & Invariant bilinear form \\
                    \midrule
                    $\bm{\mfr_1}$ & $[x,x]=0$ & & $\gamma X^2$\\[1mm]
                    \hdashline[1pt/1pt]
                    &&&\\[-2mm]
                    $\bm{\mfr_{4,1}}$ & $[a,x]=x$ & $[a,y]=-y,$  & $2AZ+2X_1X_2$\\
                    &$[x,y]=z$&&\\ 
                    $\bm{\mfr^\lambda_{4,2}}$ & $[a,x]=\lambda y$ &$[a,y]=x$ & $2AZ-\lambda X_1^2+X_2^2$\\
                    &$[x,y]=\lambda z$ & &\\[1mm]
                    \hdashline[1pt/1pt]
                    &&&\\[-2mm]
                    $\bm{\mfr_5}$ & $[x_1,x_2]=y$ &$[x_1,y]=z_1$ & $\gamma(2X_1Z_2-2X_2Z_1+Y^2)$\\
                    &$[x_2,y]=z_2$&&\\[1mm]
                    \hdashline[1pt/1pt]
                    &&&\\[-2mm]
                    $\bm{\mfr_{6,1}}$& $[a,x_1]=x_2$ &$[a,y_1]=-y_2$ &$2AZ+2X_1Y_2+2X_2Y_1$\\
                    &$[x_1,y_1]=z$&&\\
                    $\bm{\mfr^\lambda_{6,2}}$& $[a,x_1]=x_1$ &$[a,x_2]=-x_2$ & $2AZ+2X_1X_2+2Y_1Y_2$\\
                    &$[a,y_1]=\lambda y_1$ &$[a,y_2]=-\lambda y_2$&\\
                    &$[x_1,x_2]=z$&$[y_1,y_2]=\lambda z$&\\
                    $\bm{\mfr_{6,3}}$ & $[a,x_1]=x_1+x_2$ &$[a,x_2]=x_2$ & $2AZ+2X_1Y_2+2X_2Y_1$\\
                    &$[a,y_1]=-y_1-y_2$&$[a,y_2]=-y_2$&\\
                    &\multicolumn{2}{l}{$[x_1,y_1]=[x_1,y_2]=[x_2,y_1]=z$}&\\
                    $\bm{\mfr^\lambda_{6,4}}$ & $[a,x_1]=x_1$ &$[a,x_2]=-x_2$ & $\gamma(2AZ+2X_1X_2-\lambda Y_1^2+Y_2^2)$\\
                    &$[a,y_1]=\lambda y_2$&$[a,y_2]=y_1$&\\
                    &$[x_1,x_2]=z$&$[y_1,y_2]=\lambda z$&\\
                    $\bm{\mfr^{\lambda,\mu,\gamma}_{6,5}}$ & $[a,x_1]=\lambda x_2$ &$[a,x_2]=x_1$ &$\nu(2AZ-\lambda X_1^2+X_2^2-\gamma\mu Y_1^2+\gamma Y_2^2)$\\
                    &$[a,y_1]=\mu y_2$&$[a,y_2]=y_1$&\\
                    &$[x_1,x_2]=\lambda z$&$[y_1,y_2]=\gamma\mu z$&\\
                    $\bm{\mfr^{\lambda}_{6,6}}$ & $[a,x_1]=\lambda y_1+x_2$ &$[a,y_1]=x_1-y_2$ & $\gamma(2AZ+2X_1Y_2+2X_2Y_1)$\\
                    &$[a,x_2]=-\lambda y_2$&$[a,y_2]=-x_2$&\\
                    &$[x_1,y_1]=[y_1,y_2]=z$&$[x_1,x_2]=\lambda z$&\\
                    $\bm{\mfr^{\lambda}_{6,7}}$ & $[a,x_1]=y_1$ &$[a,y_1]=-\lambda x_1+2y_1$ & $2AZ+2X_1Y_2+2X_2Y_1$\\
                    &$[a,x_2]=-2x_2-y_2$&$[a,y_2]=\lambda x_2$&\\
                    &$[x_1,x_2]=z$&$[y_1,x_2]=2z$&\\
                    &$[y_1,y_2]=-\lambda z$&&\\
                    $\bm{\mfr^{\lambda,\mu,\gamma}_{6,8}}$ & $[a,x_i]=x_{i+1},\, i\leq 3$ &$[a,x_4]=-\mu x_1+\lambda x_3$ & $\nu(2AZ+\gamma X_1^2-2X_1X_3+X_2^2$\\
                    &$[x_1,x_2]=z$&$[x_1,x_4]=(\gamma\mu+\lambda)z$&$+2(\gamma\mu+\lambda)X_2X_4-(\gamma\mu+\lambda)X_3^2$\\
                    &$[x_2,x_3]=-(\gamma\mu+\lambda)z$&$[x_3,x_4]=(\gamma\lambda\mu+\lambda^2-\mu)z$&$+(\gamma\lambda\mu+\lambda^2-\mu)X_4^2)$\\[1mm]
                    \hdashline[1pt/1pt]
                    &&&\\[-2mm]
                    $\bm{\mfr_{7,1}}$ & $[a,x_i]=x_{i+1},\, i\leq 4$ &$[x_1,x_4]=z$ &$\gamma(2AZ-2X_1X_5+2X_2X_4-X_3^2)$\\
                    &$[x_2,x_3]=-z$&&\\
                    $\bm{\mfr_{7,2}}$& $[a,x_1]=x_1$ &$[a,x_2]=-x_2$ & $\gamma(2AZ+2X_1X_2-2Y_1Y_3+Y_2^2)$\\
                    &$[a,y_i]=y_{i+1},\, i\leq 2$&$[x_1,x_2]=[y_1,y_2]=z$&\\
                    $\bm{\mfr^{\lambda,\gamma}_{7,3}}$ & $[a,x_1]=\lambda x_2$ &$[a,x_2]=x_1$ & $\nu(2AZ-\gamma\lambda X_1^2+\gamma X_2^2-2Y_1Y_3+Y_2^2)$\\
                    &$[a,y_i]=y_{i+1},\, i\leq 2$&$[x_1,x_2]=\lambda\gamma z$&\\
                    &$[y_1,y_2]=z$&\\
                    $\bm{\mfr_{7,5}}$ & $[a,x_1]=x_1$ &$[a,x_2]=-x_2$ &$\gamma(-2AZ+2X_1W_2-2X_2W_1+Y^2)$\\
                    &$[a,w_1]=w_1$&$[a,w_2]=-w_2$&\\
                    &$[x_1,x_2]=y$&$[x_1,y]=w_1$&\\
                    &$[x_2,y]=w_2$&$[x_1,w_2]=[x_2,w_1]=-z$&\\
                    $\bm{\mfr^\lambda_{7,6}}$ & $[a,x_1]=\lambda x_2$ &$[a,x_2]=x_1$ &$\gamma(-2AZ+2X_1W_2-2X_2W_1+Y^2)$\\
                    &$[a,w_1]=\lambda w_2$&$[a,w_2]=w_1$&\\
                    &$[x_1,x_2]=y$&$[x_1,y]=w_1$&\\
                    &$[x_2,y]=w_2$&$[x_1,w_1]=\lambda z$&\\
                    &$[x_2,w_2]=-z$&&\\[1mm]
                    \bottomrule
                \end{tabularx}
                \caption{Indecomposable Lie algebras up to dimension $7$ in characteristic different from $2$.}
                \label{tab_indecomposable_up_to_seven}
            \end{table}   
    \section{Algebraically and real closed fields}\label{section_alg_real_closed_fields}
        The isomorphism and isometric isomorphism problems for a given $n$-parameter family of algebras in the previous section are beyond the scope of the present work. We address these questions only over algebraically closed and real closed fields. This allows us to compare our results with existing classifications over the complex and real fields, such as those in \cite{campoamor2008quasi} (up to dimension 6 over the reals) and the preprint  \cite{duong2014solvable} (up to dimension $8$ over the complex field). The first and simplest step is to rescale the bilinear forms. 
        \begin{lemma}\label{lemma_rescaling_squares}
            Let $(V, \varphi)$ be an orthogonal vector space, $\delta\in \der_\varphi (V)$ and $(V_\delta,\varphi_\delta)=\mfd(V, \varphi,\delta)$. Then, $(V_\delta,\varphi_\delta)$ and $(V_\delta,\gamma\varphi_\delta)$ are isometrically isomorphic for any $0\neq\gamma\in\mbF^2$.
        \end{lemma}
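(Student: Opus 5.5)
We want an explicit isometry. Write $\gamma=s^2$ with $s\in\mbF^*$. The point is that $(V_\delta,\gamma\varphi_\delta)$ comes from rescaling the whole algebra, and rescaling a Lie algebra by a scalar gives an isomorphic Lie algebra. The only subtlety is that the bracket of $V_\delta$ is bilinear, so a scalar multiple of the identity is not an automorphism. We therefore compensate by rescaling the $\delta$ and $\delta^*$ directions differently from $V$.

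\textbf{Definition of the map.} Define $g\colon V_\delta\to V_\delta$ by
\[
g(\delta)=\tfrac{1}{s^2}\,\delta,\qquad g(v)=\tfrac1s\, v\ \ (v\in V),\qquad g(\delta^*)=\delta^* .
\]
Then $g$ is a linear bijection.

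\textbf{Isometry.} We show $\varphi_\delta(g(u),g(u'))=\tfrac{1}{s^2}\varphi_\delta(u,u')$, so that $g$ is an isometry from $(V_\delta,\varphi_\delta)$ onto $(V_\delta,\gamma^{-1}\varphi_\delta)$. Using \eqref{eq_double_product_form} we check the three kinds of pairings:
\begin{itemize}
\item $\varphi_\delta(g\delta,g\delta^*)=\tfrac1{s^2}$;
\item $\varphi_\delta(gv,gw)=\tfrac1{s^2}\varphi(v,w)$;
\item all other pairings vanish, as they do for $\varphi_\delta$ itself.
\end{itemize}

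\textbf{Bracket preservation.} Now check $g([u,u']_\delta)=[g u,g u']_\delta$ on basis-type elements, using \eqref{eq_double_product} and the fact that $V$ is abelian:
\begin{itemize}
\item $[g\delta,gv]=\tfrac1{s^3}\delta(v)=g(\delta(v))$;
\item $[gv,gw]=\tfrac1{s^2}\varphi(\delta v,w)\delta^*$, while $g(\varphi(\delta v,w)\delta^*)=\varphi(\delta v,w)\delta^*$. These do not agree.
\end{itemize}
So this choice fails, and we adjust the scaling to $g(\delta)=\alpha\delta$, $g(v)=\beta v$, $g(\delta^*)=\epsilon\delta^*$ and solve for the constants. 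The conditions are:
\begin{itemize}
\item bracket $[\delta,v]$: $\alpha\beta=\beta$, so $\alpha=1$;
\item bracket $[v,w]$: $\beta^2=\epsilon$;
\item scaling the form by a common factor $c$: $\alpha\epsilon=c$ and $\beta^2=c$.
\end{itemize}
Hence $\alpha=1$, $\epsilon=\beta^2=c$. Taking $\beta=s$ gives $g(\delta)=\delta$, $g(v)=sv$, $g(\delta^*)=s^2\delta^*$, with $\varphi_\delta(gu,gu')=s^2\varphi_\delta(u,u')=\gamma\varphi_\delta(u,u')$.

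\textbf{Conclusion.} Re-checking with these values: $[g\delta,gv]=s\delta(v)=g\delta(v)$, and $[gv,gw]=s^2\varphi(\delta v,w)\delta^*=g(\varphi(\delta v,w)\delta^*)$. All brackets involving the central element $\delta^*$ are zero on both sides. Hence $g$ is a Lie automorphism satisfying $\gamma\varphi_\delta(u,u')=\varphi_\delta(gu,gu')$, i.e.\ an isometric isomorphism $(V_\delta,\gamma\varphi_\delta)\to(V_\delta,\varphi_\delta)$. The only real obstacle is choosing the weights correctly, namely weight $0,1,2$ on $\delta,V,\delta^*$, which reflects the natural grading of the double extension.
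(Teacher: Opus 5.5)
Your final map $g(\delta)=\delta$, $g(v)=sv$ for $v\in V$, $g(\delta^*)=s^2\delta^*$ is exactly the isometric isomorphism $(V_\delta,\gamma\varphi_\delta)\to(V_\delta,\varphi_\delta)$ used in the paper, and your checks of the brackets and of the form are correct. In a final write-up, delete the abandoned first attempt with weights $s^{-2},s^{-1},1$; its first bullet also asserts $[g\delta,gv]=g(\delta(v))$, which is false for that map.
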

        \begin{proof}
            For a standard basis of $V_\delta$, $\{\delta, v_1, \dots, v_n, z\}$, define $f(\delta)=\delta$, $f(z)=\gamma z$ and $f(v_i)=\sqrt{\gamma}v_i$. Then, extending $f$ linearly we obtain an isometric isomorphism $f\colon (V_\delta,\gamma\varphi_\delta)\to (V_\delta,\varphi_\delta)$.
        \end{proof}
        
        \subsection{Algebraically Closed Fields}
            For these fields, every irreducible polynomial is linear. Hence, the only elementary divisors are of the form $(x-\lambda)^k$, $k\geq 1$ and $\lambda \in \mbF$. Moreover, by Lemma~\ref{lemma_rescaling_squares}, the bilinear form $\varphi$ can be normalized to $1$.

            \begin{theorem}\label{indecomposable_algebraically_closed}
                Up to isometric isomorphisms,  the indecomposable and solvable quadratic Lie algebras up to dimension $7$ over algebraically closed fields of characteristic not two are in the following list:
                \begin{enumerate}
                    \item The one-dimensional algebra $(\mfr_1,\varphi_1^1)$.
                    \item The $4$-dimensional algebra  $(\mfr_{4,1},\varphi_{4,1})$.
                    \item The $5$ and $6$-dimensional nilpotent algebras  $(\mfr_5,\varphi_5^1)$ and $(\mfr_{6,1},\varphi_{6,1})$.
                    \item The $6$-dimensional algebras $(\mfr_{6,2}^\lambda,\varphi_{6,2})$ for any $\lambda \neq 0$, and  $(\mfr_{6,3},\varphi_{6,3})$.
                    \item The $7$-dimensionals algebras $(\mfr_{7,1},\varphi_{7,1}^1)$, $(\mfr_{7,2},\varphi_{7,2}^1)$ and $(\mfr_{7,5},\varphi_{7,5}^ 1)$.
                \end{enumerate} 
                The algebras in the list are pairwise non-isometrically isomorphic, except for the one-parameter family $\mfr_{6,2}^\lambda$. Two algebras $\mfr_{6,2}^\lambda$ and $\mfr_{6,2}^\mu$ are isometrically isomorphic if and only if $\mu=\pm \lambda^{\pm1}$.     
            \end{theorem}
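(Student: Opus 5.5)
The plan is to specialize the general classification theorem (Table~\ref{tab_indecomposable_up_to_seven}) to an algebraically closed field $\mbF$.

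\emph{Discarding families.} Every $\lambda\in\mbF$ is a square, and every polynomial splits into linear factors. Hence all families whose defining elementary divisors contain an irreducible factor of degree at least $2$ disappear. These are $\mfr_{4,2}^\lambda$, $\mfr_{6,4}^\lambda$, $\mfr_{6,5}$, $\mfr_{6,6}^\lambda$, $\mfr_{6,7}^\lambda$, $\mfr_{6,8}$, $\mfr_{7,3}$ and $\mfr_{7,6}^\lambda$, together with $\mfr_{7,8}$, which is isometric to $\mfr_{7,3}$. Each of them requires a condition of the form $\lambda\notin\mbF^2$ or an irreducible quartic, and such a condition is void here. The surviving list is $\mfr_1$, $\mfr_{4,1}$, $\mfr_5$, $\mfr_{6,1}$, $\mfr_{6,2}^\lambda$, $\mfr_{6,3}$, $\mfr_{7,1}$, $\mfr_{7,2}$ and $\mfr_{7,5}$. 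The algebras $\mfr_{7,4}$ and $\mfr_{7,7}$ are already identified with $\mfr_{7,1}$ and $\mfr_{7,2}$.

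\emph{Normalizing $\gamma$.} Every $\gamma\in\mbF^*$ is a square, so Lemma~\ref{lemma_rescaling_squares} normalizes $\gamma=1$. This lemma is stated for double extensions of orthogonal spaces, which covers every surviving algebra except $\mfr_{7,5}$; the $\pm$ sign in $B_{2n+1}(\pm)$ is absorbed in the same way. For $\mfr_{7,5}$, a double extension of $\mfr_5$, I would write the analogous scaling directly. Map $a\mapsto a$, send the generators $x_1,x_2$ to $t x_1,t x_2$, and hence $y\mapsto t^2y$ and $w_i\mapsto t^3w_i$. Then adjust $z$ so that the form is multiplied by $t^6$, and choose $t$ with $t^6=\gamma^{-1}$. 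The same trick works for $\mfr_1$ via $x\mapsto\sqrt{\gamma}\,x$.

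\emph{Non-isomorphism between families.} This was already obtained in Section~\ref{section_constructions} through Proposition~\ref{prop_necessary_isomorphism}. Between different dimensions it is trivial.

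\emph{The family $\mfr_{6,2}^\lambda$.} This is the real content of the statement.

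For sufficiency, rescaling $\delta$ by $\lambda^{-1}$ through Proposition~\ref{prop_iso_iso_reduction_one} turns the divisors $\{\pm1,\pm\lambda\}$ into $\{\pm\lambda^{-1},\pm1\}$. The two orthogonal blocks can be swapped by an isometry $\sigma$ of $V$. Inverting the sign of $\lambda$ amounts to swapping the two basis vectors of a $B_2$ block, which is an isometry conjugating $\operatorname{diag}(\lambda,-\lambda)$ to $\operatorname{diag}(-\lambda,\lambda)$. This yields $\mu=\pm\lambda^{\pm1}$.

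For necessity, take $w\in\mfr_{6,2}^\lambda\setminus(\mfr_{6,2}^\lambda)^2$. The induced map $\overline{\ad w}$ on $L^2/Z(L)$ is $k\delta$ for some $k\neq0$, so it has eigenvalues $\{\pm k,\pm k\lambda\}$. An isomorphism $L_\lambda\to L_\mu$ maps $w$ to some $w'\notin L_\mu^2$. It induces an isomorphism of the abelian quotients intertwining $\overline{\ad w}$ with $\overline{\ad w'}$, and this forces $\{\pm k,\pm k\lambda\}=\{\pm k',\pm k'\mu\}$. Comparing ratios of eigenvalues gives $\mu\in\{\pm\lambda,\pm\lambda^{-1}\}$.

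The main obstacle is to make this spectral argument rigorous. One has to check that the induced map on $L^2/Z(L)$ is well defined up to a nonzero scalar, i.e.\ that the ambiguity from $w+L^2$ only adds nilpotent inner parts. For $\mfr_{6,2}$, $\ad$ of elements of $L^2$ induces zero on $L^2/Z(L)$, so $\overline{\ad w}$ depends only on the class of $w$ modulo $L^2$, which is one-dimensional; this settles it.
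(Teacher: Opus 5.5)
Your proposal is correct and takes the paper's route. You keep only the split families from Section~\ref{section_constructions}, normalize $\gamma$ via Lemma~\ref{lemma_rescaling_squares}, and settle $\mfr_{6,2}^\lambda$ by matching the spectrum $\{\pm k',\pm k'\mu\}$ of $\overline{\ad w'}$ on $L^2/Z(L)$ with $\{\pm1,\pm\lambda\}$. You also spell out the sufficiency that the paper calls a quick check, via rescaling $\delta$ by $\lambda^{-1}$, swapping the two blocks, and swapping the vectors of a $B_2$ block.

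Your separate scaling for $\mfr_{7,5}$ is worth keeping, since the paper's lemma does not literally cover it, but it contains a slip. The element $z$ cannot be adjusted: $[x_1,w_2]=-z$ forces $z\mapsto t^4z$, and the form is then multiplied by $t^4$, not $t^6$. This is harmless over an algebraically closed field.
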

            \begin{proof}
                The result follows from Section~\ref{section_constructions}. It remains to prove when distinct members of the parametric family in item (4) are not isometrically isomorphic. Note that $\overline{\ad\,a}$ on $\mfg:=(\mfr_{6,2}^\lambda)^2/Z(\mfr_{6,2}^\lambda)$ has (possibly) $4$ distinct eigenvalues, $\pm1$ and $\pm\lambda$. And for a given $w\in \mfr_{6,2}^\lambda\setminus (\mfr_{6,2}^\lambda)^2$, the elementary divisors of $\overline{\ad w} $ on $\mfg$  are $\pm k, \pm k\lambda$. If $\mfr_{6,2}^\lambda\cong \mfr_{6,2}^\mu$ there exists $w\in \mfr_{6,2}^\mu$ with the same eigenvalue as $\overline{\ad\,a}$, so $k\mu,k\in\{\pm1,\pm\lambda\}$. If $\pm k=1$ then $\mu=\pm\lambda$ and $k\mu=\pm1$ if $\pm k=\lambda$ and therefore $\mu=\pm\lambda^{-1}$. That is, $\mu\in\{\pm\lambda^{\pm1}\}$. A quick check shows that these four cases provide  isomorphic algebras.
            \end{proof}
    
            \begin{remark}\label{remark_isomorphism_6_2}
                It should be noted that the isometric isomorphism criterion $(\mfr_{6,2}^\lambda, \varphi_{6,2})\cong (\mfr_{6,2}^\mu,\varphi_{6,2})$ iff  $\mu\in\{\pm\lambda^{\pm1}\}$ is also valid over arbitrary fields.
            \end{remark}
            \begin{remark}
                By Theorem~\ref{indecomposable_algebraically_closed}, the classification of indecomposable quadratic Lie algebras of dimension $8$ over algebraically closed fields is tractable. Starting from a $6$-dimensional quadratic algebra, the main computational challenge is determininig its outer skew derivations 
            \end{remark}
   
        \subsection{Real Closed Fields}
            Over such fields, every irreducible polynomial has degree one or two. Moreover, an element $\lambda\in\mbF$ is a square if and only if $\lambda\geq 0$. In that case, there exists a unique $\mu\geq 0$ such that $\mu^2=\lambda$. Hence,  $x^2-\lambda$ is irreducible if and only if $\lambda<0$. By Lemma~\ref{lemma_rescaling_squares}, some of the bilinear forms $\gamma\varphi$ can be rescaled to $\pm1$.

            The above remarks reduce the parametric families of dimension one and five to exactly four pairwise non-isometrically isomorphic algebras, namely $(\mfr_1, \pm\varphi_1)$ and $(\mfr_5, \pm\varphi_5)$. In dimension $4$, every irreducible elementary divisor $x^2-\lambda$ can be reduced to $x^2+1$ by rescaling $\delta\in\mfso(V,\varphi)$ by $\sqrt{-\lambda}$. (see Table~\ref{table_divisors_structure}). Therefore, we obtain three non isometrically isomorphic algebras: $(\mfr_{4,1},\varphi_{4,1})$ and $(\mfr_{4,2}^{-1},\varphi_{4,2}^{-1,1}), (\mfr_{4,2}^{-1},-\varphi_{4,2}^{-1,-1})$, the latter two having signatures $(3,1)$ and $(1,3)$. 

            In the six and seven dimensional cases, some of the matrices in Table~\ref{table_matrices} used to describe the corresponding quadratic algebras can be further refined.

            \begin{table}[h]
                \scriptsize
                \centering
                \begin{tabularx}{\textwidth}{>{\hsize=.2\hsize}XX>{\centering\arraybackslash\hsize=.5\hsize}X>{\centering\arraybackslash\hsize=.5\hsize}X}
                    \toprule
                    $V$ &\bf{Elementary Divisors} & $M(\delta)$ & $M(\varphi)$\\
                    \midrule
                    \multirow{4}{*}{$V_1$}&$(x^2+\lambda^2), \lambda>0$ & \scalebox{.8}{$\begin{pmatrix}
                        0&-\lambda\\\lambda&0
                    \end{pmatrix}$ }&\scalebox{.8}{$\pm\begin{pmatrix}
                        1&0\\0&1
                    \end{pmatrix}$}\\[2mm]
                    &&\\[-2mm]
                    &$(x^2+\lambda^2)^2, \lambda>0$ & \scalebox{.8}{$\begin{pmatrix}
                        0&-\lambda&0&0\\\lambda&0&0&0\\1&0&0&\lambda\\0&-1&-\lambda&0
                    \end{pmatrix}$} &$\pm B_4$\\
                    &&\\[-2mm]
                    \hdashline[1pt/1pt]
                    &&\\[-2mm]
                    $V_2$&$\displaystyle(x^2-2x+(1+\lambda^2)), \newline(x^2+2x+(1+\lambda^2)),\,\lambda> 0$ & \scalebox{.8}{$\begin{pmatrix}
                        1&\lambda&0&0\\-\lambda&1&0&0\\0&0&-1&-\lambda\\0&0&\lambda&-1
                    \end{pmatrix}$} &$B_4$\\
                    \bottomrule
                \end{tabularx}
                \caption{Matrices corresponding to nonlinear elementary divisors over real closed fields.}
                \label{table_matrices_real_closed}
            \end{table}

            By rescaling and using Tables~\ref{table_matrices} and~\ref{table_matrices_real_closed}, we obtain simpler descriptions of the quadratic algebras $(\mfr_{6,i}^*, \varphi_{6,i}^*)$, $i=4,5,6,7$ and $(\mfr_{7,j}^*, \varphi_{7,j}^*)$, $j=3,6$ over real closed fields.

            \begin{itemize}[leftmargin=*]
                \item $\bm{((\mfr_{6,4}^\lambda)^\dagger,\pm(\varphi_{6,4})^\dagger):}$ Elementary divisors $(x-1), (x+1)$ and $(x^2+\lambda^2), \lambda>0$. The algebra $(\mfr_{6,4}^\lambda)^\dagger$ has a basis $\{a,x_1,x_2,$ $y_1,y_2,z\}$ whose nonzero products and bilinear form are adjusted accordingly $[a,x_1]=x_1$, $[a,x_2]=-x_2$, $[a,y_1]=\lambda y_2$, $[a,y_2]=-\lambda y_1$, $[x_1,x_2]=z$, $[y_1,y_2]=\lambda z$ and \scalebox{0.8}{$\widehat{\begin{pmatrix}B_2&\bm{0}\\\bm{0}&I_2\end{pmatrix}}$}, $I_2=$\scalebox{0.7}{$\begin{pmatrix}  1&0\\0&1\end{pmatrix}$}, as matrix of $\varphi_{6,4}^\dagger$.
                \item $\bm{((\mfr_{6,5}^\lambda)^\dagger(\epsilon),\pm(\varphi_{6,5})^\dagger(\epsilon)), \epsilon=\pm 1:}$ Elementary divisors $(x^2+1), (x^2+\lambda^2)$ $\lambda\geq 1$. Four one-parameter families with parameter  $\lambda\geq1$. The algebra $(\mfr_{6,5}^\lambda)^\dagger(\epsilon)$ has a basis $\{a,x_1,x_2, y_1,y_2,$ $z\}$ whose nonzero products and bilinear form are adjusted accordingly $[a,x_1]=x_2$, $[a,x_2]=-x_1$, $[a,y_1]=\lambda y_2$, $[a,y_2]=-\lambda y_1$, $[x_1,x_2]=z$, $[y_1,y_2]=\epsilon\lambda z$ and \scalebox{0.7}{$\widehat{\begin{pmatrix}I_2&\bm{0}\\\bm{0}&\epsilon I_2\end{pmatrix}}$} as matrix of $(\varphi_{6,5})^\dagger(\epsilon)$.
                \item $\bm{((\mfr_{6,6})^\dagger,\pm(\varphi_{6,6})^\dagger):}$ Elementary divisor $(x^2+1)^2$. The algebra $(\mfr_{6,6})^\dagger$ has a basis $\{a,x_1,y_1,x_2,y_2,z\}$ with nonzero products $[a,x_1]=y_1+x_2, [a,y_1]=-x_1-y_2, [a,x_2]=- y_2$, $[a,y_2]=x_2$, $[x_1,x_2]=[x_1,y_1]=- [y_1,y_2]=z$ and $ B_6$ as matrix of $(\varphi_{6,6})^\dagger$.
                \item $\bm{((\mfr_{6,7}^\lambda)^\dagger,(\varphi_{6,7})^\dagger):}$ Elementary divisors $(x^2-2x+(1+\lambda^2)),\, (x^2+2x+(1+\lambda^2)),\, \lambda >0$. One-Parameter family with parameter $\lambda >0$. The algebra $(\mfr_{6,7}^\lambda)^\dagger$ has a basis $\{a,x_1,y_1,$ $x_2,y_2,z\}$ whose nonzero products and bilinear form are adjusted accordingly $[a,x_1]=x_1-\lambda y_1, [a,y_1]=\lambda x_1+y_1$, $[a,x_2]=-x_2+\lambda y_2$, $[a,y_2]=-\lambda x_2-y_2, [x_1,x_2]=-\lambda z, [x_1,y_2]=[y_1,x_2]=z, [y_1,y_2]=\lambda z$ and $B_6$ as matrix of $(\varphi_{6,7})^\dagger$.
                \item $\bm{((\mfr_{7,3})^\dagger(\epsilon),\pm (\varphi_{7,3})^\dagger(\epsilon)):}$ Elementary divisors $(x^2+1)$ and $x^3$. One-parameter family with $\lambda\not\in\mbF^2, \gamma,\nu\in \mbF^*$ The algebra $(\mfr_{7,3})^\dagger(\epsilon)$ has a basis $\{a,x_1,x_2,x_3,y_1,y_2,z\}$ whose nonzero products and bilinear form are adjusted accordingly $[a,x_i]=x_{i+1}, i=1,2$, $[a,y_1]=y_2$, $[a,y_2]=-y_1$,  $[x_1,x_2]= z$, $[y_1,y_2]=\epsilon z$ and \scalebox{0.7}{$\widehat{\begin{pmatrix}-B_3(\pm) &\bm{0}\\\bm{0}&\epsilon I_2\end{pmatrix}}$} as matrix of $(\varphi_{7,3})^\dagger(\epsilon)$.
                \item \bm{$((\mfr_{7,6})^\dagger,\pm (\varphi_{7,6})^\dagger):$} Rescaling the outer derivation of $\mfr_5$, $\delta_{0,1,\lambda,0,0,0}$,  we can assume $\lambda=-1$. The algebra $(\mfr_{7,6})^\dagger$ has a basis $\{a,x_1,x_2,y,w_1,w_2,z\}$ whose nonzero products and bilinear form are adjusted accordingly $[a,x_1]=x_2$, $[a,x_2]=-x_1$, $[a,w_1]=w_2$, $[a,w_2]=-w_1$, $[x_1,x_2]=y$, $[x_1,y]=w_1$, $[x_2,y]=w_2$, $[x_1,w_1]=-z$, $[x_2,w_2]=-z$ and $-B_7(\pm)$ as matrix of $(\varphi_{7,6})^\dagger$.        
            \end{itemize}

            \noindent The foregoing discussion is summarized in our final result.
            \begin{theorem}\label{indecomposable_real_closed_fields}
                Up to dimension $7$, the indecomposable and solvable quadratic Lie algebras over real closed fields of characteristic not two can be obtained from those listed in Table~\ref{tab_indecomposable_up_to_seven} by rescaling the irreducible polynomials. Up to isometric isomorphisms the complete list is as follow:
                \begin{enumerate}
                    \item The one-dimensional algebras $(\mfr_1,\pm\varphi_1^1)$.
                    \item The $4$-dimensional algebras $(\mfr_{4,1},\varphi_{4,1})$,    $(\mfr_{4,2}^{-1},\pm\varphi_{4,2}^{-1,1})$.
                    \item The $5$ and $6$-dimensional nilpotent algebras  $(\mfr_5,\pm\varphi_5^{1})$ and $(\mfr_{6,1},\varphi_{6,1})$.
                    \item The $6$-dimensional algebras $(\mfr_{6,2}^\lambda,\varphi_{6,2})$ with $ \lambda \geq 1$, $(\mfr_{6,3},\varphi_{6,3})$,  
                    \item[] $((\mfr_{6,4}^\lambda)^\dagger,\pm (\varphi_{6,4})^{\dagger})$ with $\lambda>0$, $((\mfr_{6,5}^\lambda)^\dagger (\epsilon),\pm(\varphi_{6,5})^\dagger(\epsilon))$ with $\lambda \geq 1$, 
                    $((\mfr_{6,6})^\dagger,\pm(\varphi_{6,6})^\dagger)$ and $((\mfr_{6,7}^\lambda)^\dagger,(\varphi_{6,7})^\dagger)$.
                    \item The $7$-dimensional algebras $(\mfr_{7,1},\pm\varphi_{7,1}^1)$, $(\mfr_{7,2},\pm\varphi_{7,2}^1)$, $(\mfr_{7,3}^\dagger(\epsilon),\pm\varphi_{7,3}^\dagger(\epsilon))$, 
                    \item[]$(\mfr_{7,5},\pm\varphi_{7,5}^1)$ and $((\mfr_{7,6})^\dagger,\pm(\varphi_{7,6})^\dagger)$.
                \end{enumerate}
                The algebras in the list are pairwise non-isometrically isomorphic.
            \end{theorem}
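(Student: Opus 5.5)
The proof has two halves: completeness of the list, which follows from Section~\ref{section_constructions}, and pairwise non-isometry, which needs invariants adapted to real closed fields.

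\emph{Completeness.} By the theorem summarizing Section~\ref{section_constructions}, every indecomposable solvable quadratic algebra of dimension at most $7$ is isometrically isomorphic to one in Table~\ref{tab_indecomposable_up_to_seven}. Over a real closed field every irreducible polynomial has degree $1$ or $2$, so the quartic cases $\mfr_{6,8}$ (divisor $x^4-\lambda x^2+\mu$ irreducible) cannot occur. Every $\lambda\notin\mbF^2$ is negative, and Lemma~\ref{lemma_rescaling_squares} reduces each scalar $\gamma,\nu$ to $\pm1$. Replacing $\delta$ by $t\delta$ (Proposition~\ref{prop_iso_iso_reduction_one}) normalizes one eigenvalue pair. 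This gives the following reductions.
\begin{itemize}
\item $x^2-\lambda$ becomes $x^2+1$.
\item In $\mfr_{6,2}$ the pair $\{\pm1,\pm\lambda\}$ can be reordered, using Remark~\ref{remark_isomorphism_6_2}, so that $\lambda\ge1$.
\item In $\mfr_{6,5}$ the two irreducible quadratics become $x^2+1$ and $x^2+\lambda^2$ with $\lambda\ge1$, after swapping the two blocks if needed.
\item In $\mfr_{6,7}$ the pair $x^2\pm ax+b$ is rescaled to $x^2\pm2x+(1+\lambda^2)$ with $\lambda>0$.
\end{itemize}
Next, I will check that the matrices of Table~\ref{table_matrices} are congruent over a real closed field to those of Table~\ref{table_matrices_real_closed}. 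For $V_1$ this amounts to diagonalizing $\gamma A_\lambda$ with $-\lambda>0$ to $\pm I_2$; for $V_2$ it is a base change inside the totally isotropic complex-pair blocks. These congruences produce the algebras marked $\dagger$.

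\emph{Non-isometry.} I will rely on three invariants, all preserved by isometric isomorphisms, as in Proposition~\ref{prop_necessary_isomorphism}.
\begin{enumerate}
\item The dimension and nilpotency class of $L$, and $\dim Z(L)$.
\item The elementary divisors of $\overline{\ad\,w}$ on $L^2/Z(L)$ for $w\notin L^2$. These are determined up to rescaling by $k\in\mbF^*$, and up to adding the inner part.
\item The signature of $\varphi$, and more finely the signature of the form induced on each primary component of $\overline{\ad\,w}$ acting on the regular quotient $L^2/Z(L)$. The induced form there is nondegenerate, since $Z(L)=(L^2)^\perp$ is its radical.
\end{enumerate}
Families with different divisor types are already separated, as in Section~\ref{section_constructions}. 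Within one family, the rescaling $k$ is fixed by requiring a given eigenvalue set. For $\mfr_{6,2}$ this is the argument of Theorem~\ref{indecomposable_algebraically_closed}, restricted to $\lambda\ge1$. For $\mfr_{6,4}^\dagger$ the real eigenvalues $\pm k$ and the imaginary part $\pm k\lambda$ together fix $k$ up to sign, and hence fix $\lambda>0$. For $\mfr_{6,5}^\dagger$ the ratio of the moduli, $\lambda\ge1$, is invariant. For $\mfr_{6,7}^\dagger$ the ratio of imaginary part to real part, $\lambda$, is invariant.

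The signs $\pm$ and $\epsilon$ are separated by signatures. On each $x^2+c^2$ component the induced form is definite, and its sign is invariant. Indeed, an isometry intertwining $\overline{\ad\,w}$ maps primary components to primary components with the same polynomial up to rescaling, and restricts to an isometry between them. The case $\epsilon=\pm1$ for $\mfr_{6,5}^\dagger$ compares the two definite blocks, recording whether their signs agree. The overall sign $\pm$ is read from the total signature of $\varphi$, or, for $\mfr_1$, $\mfr_5$, $\mfr_{7,1}$ and $\mfr_{7,2}$, from the sign $\gamma$ on the odd-dimensional $x^{2n+1}$ block. There the form $\gamma B_{2n+1}(\pm)$ has signature determined by $\gamma$, because the even part is hyperbolic.

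Algebras whose form is entirely hyperbolic do not carry a $\pm$ sign: $\varphi_{4,1}$, $\varphi_{6,1}$, $\varphi_{6,3}$ and $\varphi_{6,7}^\dagger$. For them I will verify directly that $\varphi\cong-\varphi$ through an isometric isomorphism, for instance $a\mapsto a$, $z\mapsto -z$ and a sign change on one isotropic half.

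\emph{Main obstacle.} The hardest step is justifying that the sign of each definite component is an isometric invariant. This is especially delicate for $\mfr_{6,6}^\dagger$, where the component $(x^2+1)^2$ is hyperbolic-looking, carrying $\pm B_4$. There I would use the induced form on $\ker(\delta^2+1)/\operatorname{im}(\delta^2+1)$, or equivalently the form $(u,v)\mapsto\varphi((\delta^2+1)u,v)$, which is definite with sign $\pm$. I also need to check that this sign is unchanged when $\delta$ is modified by inner derivations and rescaled by $k$. Negative $k$ flips $\delta$ but not $\delta^2$, so the sign survives.
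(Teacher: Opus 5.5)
Your route is genuinely different from the paper's. The paper separates list members using only Lie-theoretic data. The continuous parameters are fixed by the elementary divisors of $\overline{\ad\,w}$ on $\mfr^2/Z(\mfr)$. The signs $\epsilon$ are separated by the symmetric form $B_w$, $[\ad\,w(x),y]=B_w(x,y)z$, on $\mfr^2/Z(\mfr)$ (resp.\ $z^\perp/\mbF z$). This form is definite, resp.\ semidefinite, for $\epsilon=1$ and indefinite for $\epsilon=-1$. For $(\mfr_{6,5}^\lambda)^\dagger(\epsilon)$ this buys more than is needed: the two values of $\epsilon$ give non-isomorphic Lie algebras. The paper's proof, however, never examines the overall signs $\pm$.

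Your invariants are metric: the signature of $\varphi$, and of the induced nondegenerate form on each primary component of $\overline{\ad\,w}$ on $L^2/Z(L)$. They only detect isometry classes, but that is exactly what the $\pm$ question needs. Your form $(u,v)\mapsto\varphi((\delta^2+1)u,v)$ does settle $\pm(\varphi_{6,6})^\dagger$. It equals $\mp 2I_2$ on $V/\ker(\delta^2+1)$. It is transported by any isometric isomorphism, since $\overline{\ad\,w}=k\delta$ exactly on $L^2/Z(L)\cong V$ and $k^2>0$. Your check that hyperbolic forms carry no sign is also correct; add $\varphi_{6,2}$ to that list.

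The step that fails is ``the overall sign $\pm$ is read from the total signature of $\varphi$'' for $(\mfr_{6,5}^\lambda)^\dagger(-1)$, where both signs give signature $(3,3)$.

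For $\lambda>1$ your own invariant (3) repairs this. Equality of the modulus ratios forces $|k|=|k'|$. Hence an isometric isomorphism carries the block $\ker(\overline{\ad\,w}^2+k^2)$ to its counterpart, and the form there is $+I_2$ or $-I_2$ according to the sign.

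For $\lambda=1$ nothing can be repaired. The two blocks merge into one primary component with form $\pm(I_2\oplus(-I_2))$. Let $f$ be the linear map with $f(a)=a$, $f(x_i)=y_i$, $f(y_i)=x_i$, $f(z)=-z$. It is a Lie automorphism of $(\mfr_{6,5}^1)^\dagger(-1)$: it commutes with $\ad\,a$, and it exchanges $[x_1,x_2]=z$ with $[y_1,y_2]=-z$. It also satisfies $\varphi(f(u),f(v))=-\varphi(u,v)$. So $((\mfr_{6,5}^1)^\dagger(-1),\varphi)$ and $((\mfr_{6,5}^1)^\dagger(-1),-\varphi)$ are isometrically isomorphic.

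Thus the pairwise non-isometry claim of the statement is false at this single point, and the list should keep only one sign there. No proof can establish it. The paper's proof misses this because it never examines the $\pm$ signs. Your component-wise approach is the one that exposes it, had you not shortcut to the total signature.
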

            \begin{proof}
                Two families in different items are non-isomorphic just as has been proven in the previous section. By  Remark~\ref{remark_isomorphism_6_2}, $\mfr^\lambda_{6,2}\cong \mfr^\mu_{6,2}$ with $\lambda,\mu\geq 1$ if and only if $\lambda=\mu$. For the rest of one-parameter families, an analogue result is also true and can be proven using the elemental divisors of some generic elements on the quotient $\mfr^2/Z(\mfr)$. 

                Next we see $(\mfr^\lambda_{6,5})^\dagger(1)$ and $(\mfr_{6,5}^\lambda)^\dagger(-1)$ are not isomorphic. Let $\mfr=(\mfr^\lambda_{6,5})^\dagger(\epsilon)$ be, and notice that $[\mfr^2,\mfr^2]=Z(\mfr)=\mbF\cdot z$. Define a symmetric bilinear form on $\mfr^2/Z(\mfr)$ by $\overline{B_w}(\bar{x},\bar{y})=B_w(x,y)$, where $[\ad w(x),y]=B_w(x,y)z$ and $x,y\in \mfr^2$, $w\in \mfr$. If $w=\gamma a+z$ with $z\in\mfr^2$ then $[\ad w(x),y]=-\gamma\varphi(\delta(x),\delta(y))\delta^*$, as $z=\pm\delta^*$, if $\gamma\neq 0$, the bilinear form $B_w$ has, up to sign change, the same definiteness as $\varphi(\delta(\cdot),\delta(\cdot))$. In the algebra $(\mfr_{6,5}^\lambda)^\dagger(1)$, $\overline{B_w}$ is zero or definite, in contrast, for $(\mfr^\lambda_{6,5})^\dagger(-1)$ it is zero or indefinite. If there is an isomorphism $f\colon (\mfr_{6,5}^\lambda)^\dagger(1)\cong (\mfr_{6,5}^\lambda)^\dagger(-1)$, $[\ad\, f(w)(f(x)),f(y)]=[\ad\, w(x),y]$ for all $x,y\in (\mfr_{6,5}^\lambda)^\dagger(-1)$ and thus
                \[
                    \overline{B_{f(w)}}(\overline{f(x)},\overline{f(y)})=\overline{B_{f(w)}}(\bar{f}(x),\bar{f}(y))=\overline{B_w}(\bar{x},\bar{y}).
                \]
                A contradiction by the different definiteness of the bilinear forms.

                Finally, we prove that $\mfr_{7,3}(1)$ and $ \mfr_{7,3}(-1)$ are not isomorphic. Denoting $\mfr=\mfr_{7,3}(\epsilon)$ we also have $[\mfr^2,\mfr^2]=\mbF\cdot z\subseteq Z(\mfr)$, is an invariant subspace for any isomorphism. Consider the ideal $I=z^\perp=\mfr^2\oplus\mbF\cdot x_1$. The derived algebra is preserved by isomorphisms and as $[x_1,\mfr^2]=\mbF\cdot z$, $B(f(x_1),f(z))=B(f(x_1),[f(x_1),f(y)])=B([f(x_1),f(x_1)],f(y))=0$ and thus $z^\perp$ is also preserved by isomorphisms. Consider now the bilinear form in $z^\perp / \mbF\cdot z$ given by $\overline{B_w}(\bar{x},\bar{y})=B_w(x,y)$ and $[\ad\, w(x),y]=B(x,y)z$, here $w\in \mfr$ and $x,y\in z^\perp$. For the algebra $\mfr_{7,3}(1)$ this bilinear form is zero or semidefinite, in contrast in $\mfr_{7,3}(-1)$ it is indefinite or zero. Reasoning as in the previous paragraph, the algebras can not be isomorphic.
            \end{proof}
    \section{Epilogue}    
        The existing literature of solvable quadratic Lie algebras contains only a limited number of classification results. We summarize these results below and discuss their scope and limitations.
        \begin{enumerate}
            \small{
                \item [1987:] \cite{Favre_Santharoubane_1987}, quadratic nilpotent up to dimension $7$ in characteristic not 2. The algebras  $\mfr_1$, $n_{2,3}$, $n_{3,2}$ and $\mfr_{7,1}$ exhaust the indecomposable list.
                \item [2007:] \cite{Kath_2007} real nilpotents up to dimension $10$.
                \item [2008:] \cite{campoamor2008quasi}, real solvable up to dimension $6$. The indecomposable algebras are covered by items (1) to (4) in Theorem \ref{indecomposable_real_closed_fields}.
                \item [2014:] \cite{duong2014solvable}, complex solvable up to dimension $8$. Up to dimension $7$ are covered by Theorem \ref{indecomposable_algebraically_closed}.
                \item [2014:] \cite{Benayadi_Elduque_2014} Complex and reals nonsolvable Lie algebras up to dimension $13$. Those algebras  are double extensions of solvable quadratic Lie algebras up to dimension $7$ by a simple $3$-dimensional algebra.
                \item [2017:]\cite{Benito_delaConcepcion_Laliena_2017}, complex and real nilpotents of $2$ or $3$ generators and low nilindex.
            }
        \end{enumerate}
        The results along this paper extend the classification of solvable quadratic Lie algebras up to dimension $7$ to arbitrary fields and provide a foundation for further developments via iterated double extensions. Except for \cite{Kath_2007,Benayadi_Elduque_2014,Benito_delaConcepcion_Laliena_2017}, previous classifications rely primarily on the double extension method. As shown in Sections \ref{general_section} and \ref{section_constructions}, this approach remains tractable for solvable Lie algebras of dimension $8$ and nilpotent ones up to dimension $12$, although it becomes considerably more intricate in higher dimensions, especially over non-algebraically closed fields. Combining double extensions with other techniques developed in \cite{Kath_2007,Benito_delaConcepcion_Laliena_2017,Benayadi_Elduque_2014} may be useful for studying and classifying subclasses of quadratic Lie algebras, such as nilpotent and nonsolvable , as well as the class of Lie algebras of quadratic dimension $2$ treated in \cite{bajo2007lie}. Further algebraic and geometric approaches to quadratic Lie algebras are developed in \cite{garcia2020invariant,alvarez2021deformation} and references therein.

\end{document}